\newcommand{\R}{\mathbb R}
\newcommand{\Z}{\mathbb Z}
\newcommand{\Q}{\mathbb Q}
\newtheorem{theorem}{Theorem}[section]
\newtheorem{lemma}[theorem]{Lemma}
\newtheorem{definition}[theorem]{Definition}
\begin{document}

\title{Projective Splitting Algorithms for Integer Linear Programming\\
{\Large Part 1: Pure Integer Programs}
}

\author{
Federico Rodes\footnote{Licentiate in Applied Mathematics, FCEyN, Universidad de Buenos Aires, Argentina. E-mail: \texttt{rodesf@gmail.com}}
\and 
Isabel Mendez-Diaz\footnote{Departamento de Computacion, FCEyN, Universidad de Buenos Aires, Argentina. Tel.: +54-11-4576-3390, Fax: +54-11-45763359, E-mail: \texttt{imendez@dc.uba.ar}}
\and  
Paula Zabala\footnote{Departamento de Computacion, FCEyN, Universidad de Buenos Aires/CONICET, Argentina. 
Tel.: +54-11-4576-3390, Fax: +54-11-45763359, E-mail: \texttt{pzabala@dc.uba.ar}}
}

\date{}

\maketitle

\begin{abstract}
We propose a new exact approach for solving integer linear programming (ILP) problems which we will call projective splitting algorithms (PSAs). Unlike classical methods for solving ILP problems, PSAs conduct the search for the optimal solution by generating candidate solutions tailored to specific values of the objective function. As a consequence of this strategy, the number of variables in the original ILP problem is systematically reduced without adding any additional constraint to the initial formulation. 

This is the first of a two-part series on PSAs. In this paper we focus on the resolution of pure integer linear programming (PILP) problems, leaving the treatment of mixed integer linear programming (MILP) formulations to the second part of this series. The proposed algorithm was tested against the IBM ILOG CPLEX \cite{CPLEX} optimizer on instances of the 0-1 Multidimensional Knapsack Problem (0-1MKP), showing satisfactory results on instances with a large number of variables.

\end{abstract}


\section{Introduction} 
\label{sec:1}

Linear programming (LP) \cite{D1,D2} is a mathematical modelling technique designed to optimize a linear function (objective function) of non-negative continuous variables (decision variables), while satisfying a system of linear equations or inequalities (constraints). A LP model that restricts some of the variables so that these take only non-negative integer values is known as MILP. When all variables are integer-constrained, we have a PILP model. We will use the term ILP to refer to any of the two types of problems mentioned above.
 
Many practical situations can be modelled as LP problems where decision variables must take on integer values. Generating good timetables, determining optimal schedules for jobs which are to be processed in a production line, designing efficient communication networks, container loading, determining efficient vehicle routes, and various problems arising in computational biology are a few examples.

From a practical point of view, most of the examples mentioned above are extremely difficult to solve. In theoretical computer science, this is captured by the fact that many ILP problems are classified as NP-hard \cite{GaJo} problems. Thus, because of the inherent difficulty and the enormous practical importance of NP-hard ILP problems, a large number of techniques have been proposed to solve them. The available techniques can roughly be classified into two main categories: {\em exact} and {\em heuristic} algorithms. Exact algorithms are guaranteed to find an optimal solution and to prove its optimality for every instance of an ILP problem. The run-time, however, often increases dramatically with the problem instance's size, and often only small or moderately-sized instances can be practically solved to proven optimality. For larger instances, the only possibility is usually to turn to heuristic algorithms that trade optimality for run-time, i.e., they are designed to obtain good but not necessarily optimal solutions in a reasonable amount of time.

The aim of this paper is to propose---to the best of our knowledge---a new exact algorithm for solving PILP problems. The algorithm will be called PSA-{\em pilp}, and the idea behind it is to decompose the initial PILP problem into simpler one-dimensional subproblems, and then to use that information to generate a finite number of candidate solutions tailored to each of the possible optimal objective values of the problem. The optimal solution is then found by examining the set of candidate solutions arising from the previous analysis. The second part of this series is intended to extend this methodology to the class of MILP formulations.

The remainder of this paper is organized as follows. In the next section, we give a short overview of ILP techniques and introduce some notation. In Section~\ref{sec:3}, we present the basic concepts involving the PSA-{\em pilp} algorithm and account for the main steps of the method through the solution of a simple example. Section~\ref{sec:4} is devoted to present the scheme of the PSA-{\em pilp} algorithm and to prove its convergence. Computational results on instances of the 0-1MKP are reported in Section~\ref{sec:5}. Finally, Section~\ref{sec:6} summarizes the main achievements of the proposed approach and outlines some interesting directions for future research.


\section{Integer linear programming, an overview} 
\label{sec:2}

This section gives a short overview of the main concepts in integer programming. For an in-depth coverage of the subject we refer to the books on linear optimization by Chv\'atal \cite{Chvatal}, and on combinatorial and integer optimization by Wolsey \cite{Wo} and Nemhauser and Wolsey \cite{Nem}.

\subsection{LP and ILP formulations} 
\label{subsec:2.1}

LP is a class of optimization problems that involves non-negative continuous variables, an objective function linearly depending on the variables, and a set of constraints expressed as linear inequalities. We consider the form
\begin{eqnarray} \label{def:lp}
({\bf LP}) \ \ \ \
\mathrm{maximize} \ \ 
z(\mathbf{x}) = \mathbf{c}^\mathrm{T}\mathbf{x} + h \nonumber \\
\mathrm{subject \ to} \ \  
\mathbf{A}\mathbf{x} \leq \mathbf{b}\\
\mathbf{x} \in \R_+^n \nonumber
\end{eqnarray}
where $\mathbf{c} \in \Z^{n}$, $h \in \Z$, $\mathbf{A} \in \Q^{n \times m}$ and $\mathbf{b} \in \Q^{m}$ are data. A {\em feasible solution} to (\ref{def:lp}) is a vector $\mathbf{x} \in \R_+^n$ satisfying the condition $\mathbf{A}\mathbf{x} \leq \mathbf{b}$. The aim of this problem is to find a feasible solution that maximizes the objective function $z(\mathbf{x})$.

As mentioned before, if we restrict some of the variables of a LP problem to take on integer values we obtain an ILP problem. We consider the form
\begin{eqnarray} \label{def:ilp}
({\bf ILP}) \ \ \ \ 
\mathrm{maximize} \ \ 
z(\mathbf{x}) = \mathbf{c}^\mathrm{T}\mathbf{x} + h \nonumber \\
\mathrm{subject \ to} \ \
\mathbf{A}\mathbf{x} \leq \mathbf{b}\\
\mathbf{x} \in \Z_+^p \times \R_+^{n-p} \nonumber
\end{eqnarray}
where $\mathbf{c}$, $h$, $\mathbf{A}$ and $\mathbf{b}$ are defined as in (\ref{def:lp}). Without loss of generality, we assume that the variables indexed $1$ through~$p$, $p \leq n$, are the integer-constrained variables (the integer variables), and the variables indexed $p + 1$ through $n$ are called the continuous variables.

Throughout this work it will be assumed for simplicity that the feasible regions of (\ref{def:lp}) and (\ref{def:ilp}) are bounded. In addition, we will denote by $z_{\bf LP}$ (resp. $z_{\bf ILP}$) the optimal objective value of the problem, and by $cod(z(\mathbf{x}))$ the codomain of $z(\mathbf{x})$ for the problem under consideration.  
Finally, let us note that, in the context of PILP problems, the assumption made about the objective function (we do not loss generality) automatically implies that $cod(z(\mathbf{x})) \subseteq \Z$. The utility of this observation will become clear in Section~\ref{sec:3}.

\subsection{LP-relaxation} 
\label{subsec:2.2}

One of the most important concepts in ILP are {\em relaxations}, where some or all constraints of a problem are loosened or omitted. Relaxations are mostly used to obtain related, simpler problems which can be solved efficiently yielding bounds for the original problem.

The {\em linear programming relaxation} of the ILP problem (\ref{def:ilp}) is obtained by relaxing the integrality constraint, i.e., replacing $\mathbf{x} \in \Z_+^p \times \R_+^{n-p}$ with $\mathbf{x} \in \R_+^n$, yielding the LP problem (\ref{def:lp}). Large instances of such LP problems can be efficiently solved in practice by using simplex-based algorithms \cite{D1,D2}, interior-point methods \cite{KAR} or column generation approaches \cite{Chvatal}. As the feasible points of an ILP problem form a subset of the feasible points of its LP-relaxation, the optimal value of the LP-relaxation provides an upper bound on the optimal value of the original ILP problem. Therefore, if an optimal solution to the LP-relaxation satisfies the integrality restrictions, then that solution is also optimal for the ILP problem.

\subsection{Exact algorithms} 
\label{subsec:2.3}

When considering exact approaches, the following methods have had significant success. See e.g. \cite{Gom,LD,Wo} for a general introduction to these mathematical programming techniques.

\subsubsection*{Cutting plane approach} 
\label{subsubsec:2.3.1}

When modelling integer optimization problems as ILP problems, an important goal is to find a strong formulation, for which the LP-relaxation provides a solution which lies in general not too far away from the integer optimum. For many such problems it is possible to strengthen an existing ILP formulation significantly by including further inequalities, preferably, facets of the convex hull of feasible solutions. 

The general cutting plane approach relaxes initially the integrality restrictions of the original ILP problem and solves the resulting linear program. In case the resulting LP solution satisfies the integer requirements, this is the solution to the integer program; otherwise, the LP-relaxation can be tightened by adding an extra constraint which is satisfied by all feasible integral solutions but is violated by the current LP optimal solution. Such a constraint is called a {\em cut} or {\em cutting plane}. The new LP-relaxation is then resolved, and the procedure can be repeated until an optimal solution is reached. The subproblem of identifying cuts is called {\em separation problem}, and it is of crucial importance to solve it efficiently, since many instances of it must usually be solved until the cutting plane approach terminates successfully.

\begin{center}
\begin{tabbing} 
\hspace{1cm}\=\hspace{1cm}\=\hspace{1cm}\=\hspace{1cm}\=\hspace{1cm}\=\hspace{1cm}\= \\
\rule[0.1cm]{10cm}{0.01cm}\\
{\bf Algorithm 1} The Generic \texttt{Cutting-Plane} Algorithm\\
\rule[0.1cm]{10cm}{0.01cm}\\ 
{\bf Input:} $({\bf ILP}) \ \ \mathrm{max} \ \ z(\mathbf{x}) = \mathbf{c}^\mathrm{T}\mathbf{x} + h \ \ \mathrm{s.t.} \ \ \mathbf{A}\mathbf{x} \leq \mathbf{b}, \ \mathbf{x} \in \Z_+^p \times \R_+^{n-p}$\\
{\bf repeat}\\
\> solve the LP-relaxation of {\bf ILP}. Let $\mathbf{x^*}$ be an optimal solution.\\
\> {\bf if} $\mathbf{x^*}$ satisfies the integrality requirements {\bf then}\\
\> \> an optimal solution to {\bf ILP} has been found. {\bf stop}.\\
\> {\bf else}\\
\> \> solve the separation problem, that is, try to find a valid inequality $\mathbf{w}^\mathrm{T}\mathbf{x} \leq d$ such that $\mathbf{w}^\mathrm{T}\mathbf{x^*} > d$.\\
\> \> {\bf if} such an inequality $\mathbf{w}^\mathrm{T}\mathbf{x} \leq d$ cutting off $\mathbf{x^*}$ was found {\bf then}\\
\> \> \> add the inequality to the system.\\
\> \> {\bf else}\\
\> \> \> no optimal solution to {\bf ILP} was found. {\bf stop}.\\
\> \> {\bf end if}\\
\> {\bf end if}\\
{\bf until} forever
\end{tabbing}
\end{center}
\vspace*{-0.2cm}
\rule[0.1cm]{10cm}{0.01cm}
\vspace*{0.3cm}

In practice, it may take a long time for such a cutting plane approach to converge to the optimum, partly because it is often a hard subproblem to separate effective cuts. A further drawback of this technique is that no feasible integer solutions can be obtained until the optimal integer solution is reached, which implies that there is no feasible integer solution if the computations are stopped prematurely. The cutting plane method is therefore often combined with other methods, as we will see below.

\subsubsection*{Branch-and-bound methods} 
\label{subsubsec:2.3.2}

The basic structure of branch-and-bound is an {\em enumeration tree}. The {\em root} node of the tree corresponds to the original problem. As the algorithm progresses, the tree grows by a process called {\em branching}, which creates two or more child nodes of the parent node. Each of the problems at the child nodes is formed by adding constraints to the problem at the parent node. Typically, the new constraint is obtained by simply adding a bound on a single integer variable, where one child gets an upper bound of some integer $d$, and the other child gets a lower bound of $d + 1$. An essential requirement is that each feasible solution to the parent node problem is feasible to at least one of the child node problems.

Let ${\bf ILP}(0)$ be the original ILP problem and let ${\bf ILP}(k)$ be the problem at node $k$. The objective value of any feasible solution to ${\bf ILP}(k)$ provides a lower bound on the global optimal value. The feasible solution with the highest objective value found so far is called the {\em incumbent} solution and its objective value is denoted by $z^{best}$. Let $\mathbf{x}^k$ be an optimal solution to the LP-relaxation of ${\bf ILP}(k)$ with objective value $z^k$. If $\mathbf{x}^k$ satisfies the integrality constraints, then it is an optimal solution to ${\bf ILP}(k)$ and a feasible solution to ${\bf ILP}(0)$, and therefore we update $z^{best}$ as $\mathrm{max} \{z^k , z^{best}\}$. Otherwise, there are two possibilities: if $z^k \leq z^{best}$, then an optimal solution to ${\bf ILP}(k)$ cannot improve on $z^{best}$, hence the subproblem ${\bf ILP}(k)$ is removed from consideration; on the other hand, if $z^k > z^{best}$, then ${\bf ILP}(k)$ requires further exploration, which is done by {\em branching}, i.e., by creating $q \geq 2$ new subproblems ${\bf ILP}(k(i))$, $i = 1, 2, \dots, q$, of ${\bf ILP}(k)$. Each feasible solution to ${\bf ILP}(k)$ must be feasible to at least one child and, conversely, each feasible solution to a child must be feasible to ${\bf ILP}(k)$. Moreover, the solution $\mathbf{x}^k$ must not be feasible to any of the LP-relaxations of the children. A simple realization of these requirements is to select a variable $x_j$ for which $x_j^k$ is not integer and to create two subproblems; in one subproblem, we add the constraint $x_j \leq \lfloor x_j^k \rfloor$, which is the round down of $x_j^k$, and in the other $x_j \geq \lfloor x_j^k \rfloor$, which is the round up of $x_j^k$. The child nodes of node $k$ corresponding to these subproblems are then added to the tree. The largest among all LP-relaxation values associated with the active subproblems provides a global upper bound on the optimal value. The algorithm terminates when the global upper bound and global lower bound ($z^{best}$) are equal.

\begin{center}
\begin{tabbing} 
\hspace{1cm}\=\hspace{1cm}\=\hspace{1cm}\=\hspace{1cm}\=\hspace{1cm}\=\hspace{1cm}\= \\
\rule[0.1cm]{10cm}{0.01cm}\\
{\bf Algorithm 2} The \texttt{Branch-and-Bound} Algorithm\\
\rule[0.1cm]{10cm}{0.01cm}\\
{\bf Input:} $({\bf ILP}) \ \ \mathrm{max} \ \ z(\mathbf{x}) = \mathbf{c}^\mathrm{T}\mathbf{x} + h \ \ \mathrm{s.t.} \ \ \mathbf{A}\mathbf{x} \leq \mathbf{b}, \ \mathbf{x} \in \Z_+^p \times \R_+^{n-p}$.\\
{\bf 0. Initialize.} \\
\> Create a list {\bf L} of active subproblems. Set ${\bf L}=\{ {\bf ILP}(0) \}$,  $z^{best} = -\infty$ and $\mathbf{x}^{best} = \emptyset$.\\
{\bf 1. Terminate?}\\
\> Is ${\bf L} = \emptyset$? If so, {\bf return} $\mathbf{x}^{best}$ is an optimal solution to {\bf ILP}.\\
{\bf 2. Select.}\\
\> Choose and delete a problem ${\bf ILP}(k)$ from {\bf L}.\\
{\bf 3. Evaluate.}\\
\> Solve the LP-relaxation ${\bf LP}(k)$ of ${\bf ILP}(k)$. If ${\bf LP}(k)$ is infeasible, {\bf goto} Step 1,\\ 
\> else let $z^k$ be its objective function value and $\mathbf{x}^k$ be its solution.\\
{\bf 4. Prune.}\\
\> If $z^k \leq z^{best}$, {\bf goto} Step 1. If $\mathbf{x}^k$ is not integer, {\bf goto} Step 5,\\
\> else let $z^{best} = z^k$, $\mathbf{x}^{best} = \mathbf{x}^k$. {\bf Goto} Step 1.\\
{\bf 5. Branch.}\\
\> Divide the feasible domain $S^k$ of ${\bf ILP}(k)$ into smaller sets $S^{k(i)}$ for $i = 1, \dots, q$,\\
\> such that $\cup_{i=1}^q S^{k(i)} = S^k$, and add the subproblems ${\bf ILP}(k(i))$, $i = 1, \dots, q$, to {\bf L}.\\ 
\> {\bf Goto} Step 1.
\end{tabbing}
\end{center}
\vspace*{-0.2cm}
\rule[0.1cm]{10cm}{0.01cm}
\vspace*{0.3cm}

This basic scheme does not specify the rule to follow for choosing a node from {\bf L}. A popular method to do this is to select the node with the highest value $z^k$. Such strategy is known as {\em best-bound search} (or {\em best-first search}). This node selection strategy focuses the search on decreasing the global upper bound, because the only way to decrease the global upper bound is to improve the LP-relaxation at a node with the
highest LP-relaxation value. Another well-known method of selecting a node to explore is to always choose the most recently created node. This is known as {\em diving search} (or {\em depth-first search}). This node selection strategy focuses the search on increasing the global lower bound, because feasible solutions are typically found deep in the tree. In addition to a different focus, both methods also have different computational attributes. Diving search has low memory requirements, because only the sibling nodes on the path to the root of the tree have to be stored. Furthermore, the changes in the linear programs from one node to the next are minimal, a single bound of a variable changes, which allows warm-starts in the LP solves. Best-bound search, on the other hand, favors exploring nodes at the top of the tree as these are more likely to have high LP-relaxation values. This, however, can lead to large list of active subproblems. Furthermore, subsequent linear programs have little relation to each other leading to longer solution times. 

\newpage
We say that node $k$ is {\em superfluous} if $z^k < z_{\bf ILP}$. Best-bound search ensures that no superfluous nodes will be explored. On the other hand, diving search can lead to the exploration of many superfluous nodes that would have been fathomed, had we known a smaller $z^{best}$.

Most integer-programming solvers employ a hybrid of best-bound search and diving search, trying to benefit from the strengths of both, and switch regularly between the two strategies during the search. In the beginning the emphasis is usually more on diving, to find high quality solutions quickly, whereas in the later stages of the search, the emphasis is usually more on best-bound, to improve the global upper bound.\\

Combining branch-and-bound with cutting plane algorithms yields the highly effective class of {\em branch-and-cut} algorithms which are widely used in commercial ILP-solvers such as CPLEX and Gurobi \cite{Gurobi}. Cuts are generated at the nodes of the branch-and-bound search tree to tighten the bounds of the LP-relaxations or to exclude infeasible solutions.


\section{The PSA-{\em pilp} algorithm} 
\label{sec:3}

In Section \ref{sec:2} we carry out a review of the main algorithms employed in the resolution of ILP problems. In all cases, we have seen that the strategy for finding the optimal solution consists of modifying the problem domain (having previously considered its relaxation) through the addition of new constraints. In the case of the cutting planes algorithms, the new inequalities are used to separate fractional solutions of the LP-relaxation and to keep the set of integer solutions of the original ILP problem. In the case of the branch-and-bound and related methods, the inequalities are used for partitioning the problem domain and eliminating fractional solutions of the LP-relaxation.

With a different approach, in this section we present the PSA-{\em pilp} algorithm, which does not alter the problem domain and, consequently, avoids the addition of new constraints to the original formulation.  

\subsection{Definitions and terminology}
\label{subsec:3.1}

Let us begin this section by introducing the concepts of {\em projection}, {\em level} and {\em range} needed to describe the PSA-{\em pilp} algorithm. To this end, consider the two-variable PILP problem illustrated in figure \ref{fig:1} where: (i) it is supposed that the problem is in the form (\ref{def:ilp}); (ii) the set of integer solutions is represented as black points on the $(x_1,x_2)$ plane; and (iii) ${\bf P}_{\bf j}$, $j=1,2$, denotes the {\em projection with respect to the variable $x_j$}, i.e., the shadow cast by $z(\mathbf{x})$ on the $(x_j,z)$ plane.

From figure \ref{fig:1}, it can be observed that the projection ${\bf P}_{\bf j}$ is defined on the interval $[l_j,u_j]$, where the endpoints of this interval clearly represent the minimum and maximum values of the variable $x_j$ over the feasible domain of the LP-relaxation of the problem being solved. Thus, $l_j$ and $u_j$ can be formally defined as follows:  
\[
l_j = z_{\bf LP}, \ \mathrm{with} \ \ ({\bf LP}) \ \ \mathrm{min} \ z(\mathbf{x}) = x_j \ \ \mathrm{s.t.} \ \ \mathbf{A}\mathbf{x} \leq \mathbf{b}, \ \mathbf{x} \in \R_+^2; 
\]
\[
u_j = z_{\bf LP}, \ \mathrm{with} \ \ ({\bf LP}) \ \ \mathrm{max} \ z(\mathbf{x}) = x_j \ \ \mathrm{s.t.} \ \ \mathbf{A}\mathbf{x} \leq \mathbf{b}, \ \mathbf{x} \in \R_+^2.  
\]

The projection ${\bf P}_{\bf j}$ can then be described as the two-dimensional convex set enclosed by the curves $P_j^{low}(x_j): [l_j,u_j] \rightarrow \R$ and $P_j^{up}(x_j): [l_j,u_j] \rightarrow \R$. The former function corresponds to the lower boundary of the set, which we will call the {\em lower projection}, and the latter corresponds to the upper boundary, which we will  call the {\em upper projection}. It is straightforward to see that these curves can be calculated, for each fixed value $x_j= \lambda_j$, by solving two LP problems of one variable: 
\[
P_j^{low}(\lambda_j) = z_{\bf LP}, \ \mathrm{with} \ \ ({\bf LP}) \ \ \mathrm{min} \ z(\mathbf{x}) = \mathbf{c}^\mathrm{T}\mathbf{x} + h \ \ \mathrm{s.t.} \ \ x_j = \lambda_j, \ \mathbf{A}\mathbf{x} \leq \mathbf{b}, \ \mathbf{x} \in \R_+^2;
\]
\[
P_j^{up}(\lambda_j) = z_{\bf LP}, \ \mathrm{with} \ \ ({\bf LP}) \ \ \mathrm{max} \ z(\mathbf{x}) = \mathbf{c}^\mathrm{T}\mathbf{x} + h \ \ \mathrm{s.t.} \ \ x_j = \lambda_j, \ \mathbf{A}\mathbf{x} \leq \mathbf{b}, \ \mathbf{x} \in \R_+^2.
\]

Consequently, for a two-variable problem, the projection of $z(\mathbf{x})$ onto the $(x_j,z)$ plane can be defined as follows:
\[
{\bf P}_{\bf j} :=  \bigr\{ \ (x_j,z) \in \R^2 \ : \ x_j \in [l_j,u_j], \ P_j^{low}(x_j)\leq z \leq P_j^{up}(x_j) \ \bigr\}.
\]

\newpage
\begin{figure}[!h]
\begin{center}
\includegraphics[scale=0.9]{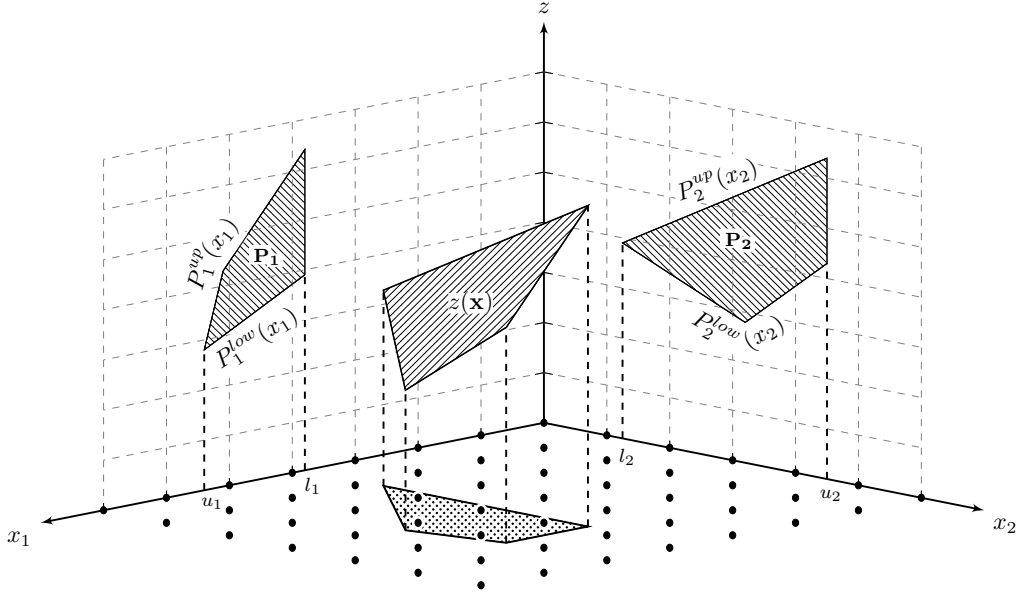}
\caption{orthogonal projections for a two-variable PILP problem}
\label{fig:1} 
\end{center}
\end{figure}

The definition of {\em projection} to be used in this paper is the natural extension of the model introduced above adapted to higher dimensions. 

\begin{definition}[Projection]
Given a PILP problem, for $j=1, \dots,n$ we define the {\em projection} of $z(\mathbf{x})$ onto the $(x_j,z)$ plane, ${\bf P}_{\bf j}$ for short, as the two-dimensional convex set satisfying the following conditions. 
\begin{equation}\label{def:proj}
{\bf P}_{\bf j} :=  \bigr\{ \ (x_j,z) \in \R^2 \ : \ x_j \in [l_j,u_j], \ P_j^{low}(x_j)\leq z \leq P_j^{up}(x_j) \ \bigr\}, 
\end{equation}
where 
\begin{equation}\label{def:lj}
l_j := z_{\bf LP}, \ \mathrm{with} \ \ ({\bf LP}) \ \ \mathrm{min} \ z(\mathbf{x}) = x_j \ \ \mathrm{s.t.} \ \ \mathbf{A}\mathbf{x} \leq \mathbf{b}, \ \mathbf{x} \in \R_+^n; 
\end{equation}
\begin{equation}\label{def:uj}
u_j := z_{\bf LP}, \ \mathrm{with} \ \ ({\bf LP}) \ \ \mathrm{max} \ z(\mathbf{x}) = x_j \ \ \mathrm{s.t.} \ \ \mathbf{A}\mathbf{x} \leq \mathbf{b}, \ \mathbf{x} \in \R_+^n; 
\end{equation}
and where the lower and upper projections, $P_j^{low}(x_j)$ and $P_j^{up}(x_j)$, can be determined, for each fixed value $x_j = \lambda_j \in [l_j,u_j]$, by solving two LP problems of $n-1$ variables:
\begin{equation}\label{def:plow}
P_j^{low}(\lambda_j) = z_{\bf LP}, \ \mathrm{with} \ \ ({\bf LP}) \ \ \mathrm{min} \ z(\mathbf{x}) = \mathbf{c}^\mathrm{T}\mathbf{x} + h \ \ \mathrm{s.t.} \ \ x_j = \lambda_j, \ \mathbf{A}\mathbf{x} \leq \mathbf{b}, \ \mathbf{x} \in \R_+^n; 
\end{equation}
\begin{equation}\label{def:pup}
P_j^{up}(\lambda_j) = z_{\bf LP}, \ \mathrm{with} \ \ ({\bf LP}) \ \ \mathrm{max} \ z(\mathbf{x}) = \mathbf{c}^\mathrm{T}\mathbf{x} + h \ \ \mathrm{s.t.} \ \ x_j = \lambda_j, \ \mathbf{A}\mathbf{x} \leq \mathbf{b}, \ \mathbf{x} \in \R_+^n. 
\end{equation}
\end{definition}

Let us now introduce the concepts of {\em level} and {\em range} which will be used to interpret the information given by the projections. 

\begin{definition}[Level]
Given a PILP problem, we will call {\em level} to each of the values that may be reached by the objective function $z(\mathbf{x})$. More precisely, we will call level to each of the elements of the $cod(z(\mathbf{x}))$ set.
\end{definition}

\begin{definition}[Range]
Given a PILP problem, the set of integer values that can be assigned to the variable $x_j$, $j=1, \dots,n$, when the projection ${\bf P}_{\bf j}$ is restricted to level $z$, will be called the {\em range} of $x_j$ on level $z$. This set will be denoted by $Range_j^{z}$, and a more formal definition is given by: 
\begin{equation}\label{def:range}
Range_j^{z} = \bigr\{ \ r \in \Z : (r, z) \in {\bf P}_{\bf j} \ \bigr\}. 
\end{equation}
\end{definition}

To fix ideas, reconsider the projections ${\bf P}_{\bf 1}$ and ${\bf P}_{\bf 2}$ of the PILP problem shown in figure \ref{fig:1}. Given that $cod(z(\mathbf{x})) \subseteq \Z$, it is straightforward to see that the set of values that may be reached by the objective function is given by: $cod(z(\mathbf{x})) = \{3,4,5,6\}$. The figure presented below illustrates the two largest elements of this set along with the range of integer values that can be assigned to the variables $x_1$ and $x_2$ at each of those levels.

\begin{figure}[!h]
\begin{center}
\includegraphics[scale=1]{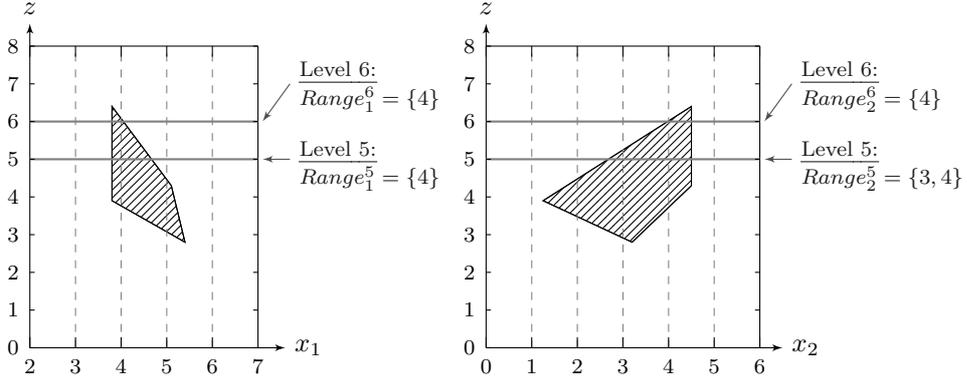}
\caption{projections ${\bf P}_{\bf 1}$ and ${\bf P}_{\bf 2}$ of the previous example crossed by levels $6$ and $5$}
\label{fig:2} 
\end{center}
\end{figure}

\subsection{Motivation} 
\label{subsec:3.2}

Let us now explain the main steps involved in the PSA-{\em pilp} algorithm through the solution of the following instance of the classical {\em Unbounded Knapsack Problem} (UKP). This example covers all possibilities that may occur when applying PSAs for solving PILP problems. 
\begin{eqnarray*}
({\bf UKP}) \ \ \ \ \mathrm{maximize} \ \ z(x_1,x_2,x_3) = 9x_1 + 3x_2 + 8x_3 \\
\mathrm{subject \ to} \ \ 10x_1 + 5x_2 + 7x_3 \leq 12\\
\mathbf{x} = (x_1,x_2,x_3) \in \Z^3_+
\end{eqnarray*}

As stated in \cite{MT90}, the LP-relaxation of every instance of the UKP can be trivially solved by comparing the quotients $\frac{c_j}{a_{1j}}$ corresponding to each variable $x_j$. For this reason, the projections ${\bf P}_{\bf j}$ in this example can be exactly computed by simply applying the expressions (\ref{def:proj}) to (\ref{def:pup}) to the proposed formulation. Thus, the family of projections of {\bf UKP} turns out to be (see figure \ref{fig:3}):
\[
{\bf P}_{\bf 1} =  \Bigr\{ \ (x_1,z) \in \R^2 : x_1 \in \Bigr[0,\frac{12}{10} \Bigr], \ 9 x_1 \leq z \leq -\frac{17}{7} x_1 + \frac{96}{7} \ \Bigr\},
\]
\[
{\bf P}_{\bf 2} =  \Bigr\{ \ (x_2,z) \in \R^2 : x_2 \in \Bigr[0,\frac{12}{5} \Bigr], \ 3 x_2 \leq z \leq -\frac{19}{7} x_2 + \frac{96}{7} \ \Bigr\},
\]
\[
{\bf P}_{\bf 3} =  \Bigr\{ \ (x_3,z) \in \R^2 : x_3 \in \Bigr[0,\frac{12}{7} \Bigr], \ 8 x_3 \leq z \leq \ \frac{17}{10} x_3 + \frac{108}{10} \ \Bigr\}.
\]

These projections make it possible to decompose the original problem into single-variable subproblems, and thus they allow us to study the behaviour of the objective function from each variable's point of view independently. In particular, every time a specific value of the objective function is observed (think of a horizontal line across ${\bf P}_{\bf 1}$, ${\bf P}_{\bf 2}$ and ${\bf P}_{\bf 3}$), the information given by the projections can be used to restrict the {\em range} of integer values that can be assigned to each variable $x_j$. As a result, candidate solutions capable of reaching the selected $z$-value can be generated by combining the allowed values of each of the $\mathbf{x}$-coordinates. 

Given that the set of all possible optimal objective values of {\bf UKP} is finite, namely $cod(z(\mathbf{x})) = \{0,1, \dots,12,13\}$, it becomes natural to address the solution of {\bf UKP} by studying the candidate solutions produced by applying the observation made above to each of the elements of the $cod(z(\mathbf{x}))$ set. Furthermore, because we are maximizing, we can conduct the search process for the optimal solution by considering, one by one in decreasing order of value, each of the elements of the $cod(z(\mathbf{x}))$ set. Then it is easy to see that, if a {\em feasible} candidate solution $\bar{\mathbf{x}}$ satisfying the condition $z(\bar{\mathbf{x}}) = z_i$ is found when level $z_i \in cod(z(\mathbf{x}))$ is being observed, this automatically implies that $\bar{\mathbf{x}}$ is a global optimum to the proposed problem.

\newpage
To show more clearly what we are saying, reconsider the projections ${\bf P}_{\bf 1}$, ${\bf P}_{\bf 2}$ and ${\bf P}_{\bf 3}$ of the problem at hand together with the three largest elements of the $cod(z(\mathbf{x}))$ set. Figure \ref{fig:3} illustrates this situation along with the range of integer values that can be assigned to the variables $x_1$, $x_2$ and $x_3$ at each of those levels. 

\begin{figure}[!h]
\begin{center}
\includegraphics[scale=0.9]{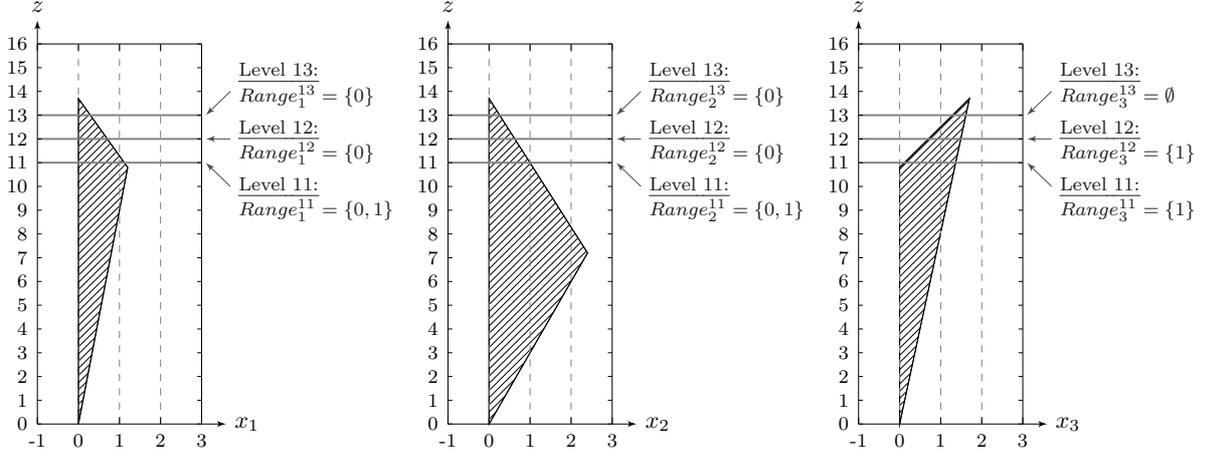}
\caption{projections ${\bf P}_{\bf 1}$, ${\bf P}_{\bf 2}$ and ${\bf P}_{\bf 3}$ of {\bf UKP} crossed by levels 13, 12, and 11}
\label{fig:3}
\end{center}
\end{figure}

{\bf Level 13.} Based on the values contained in the sets $Range_j^{13}$, $j = 1, 2, 3$, it can be inferred that there is no candidate solution capable of reaching level $13$, i.e., we can conclude that $z_{\bf UKP} < 13$. The level $13$ is then discarded from the list of possible optimal objective values of {\bf UKP}, and the search process is continued at level~$12$. 

{\bf Level 12.} From the information given by the sets $Range_j^{12}$, $j = 1, 2, 3$, it can be inferred that $\bar{\mathbf{x}}=(0,0,1)$ is the {\em unique} candidate solution capable of reaching level $12$. Then, to determine whether $12$ is the optimal objective value of the problem and $\bar{\mathbf{x}}=(0,0,1)$ is the associated optimal solution, we simply check the following two conditions (from now on the {\em stopping criterion}) on our candidate solution: (i) is $\bar{\mathbf{x}}$ feasible? (ii) does $z(\bar{\mathbf{x}})=12$? If~the answer to both questions is affirmative, clearly $\bar{\mathbf{x}}=(0,0,1)$ is an optimal solution to the proposed problem and $12$ is the optimal objective value; otherwise, level $12$ is discarded from the list of possible optimal objective values of {\bf UKP}, and the search process is continued at level~$11$. 

By a simple calculation, it is easy to see that $\bar{\mathbf{x}}=(0,0,1)$ is a feasible solution to {\bf UKP}, however, it yields an objective value of $8$. Hence, given that $\bar{\mathbf{x}}=(0,0,1)$ is the unique candidate solution arising from this value of the maximand, it can be concluded that:  $z_{\bf UKP} < 12$ and $z_{\bf UKP} \geq 8$. Before considering the next level and continuing with the search process, it is necessary to introduce two new variables in order to keep the former information: $\bar{\mathbf{x}}^{best}:=(0,0,1)$ ({\em incumbent solution}); $z^{best}:=8$ ({\em lower bound}).

{\bf Level 11.} From the sets $Range_j^{11}$, $j = 1, 2, 3$, it can be inferred that the points $\bar{\mathbf{x}}=(0,0,1)$, $\bar{\mathbf{x}}=(0,1,1)$, $\bar{\mathbf{x}}=(1,0,1)$ and $\bar{\mathbf{x}}=(1,1,1)$ are the {\em only} four candidates for level~$11$. We can now proceed in two different ways in order to determine whether some of these candidates is, in fact, an optimal solution to the proposed problem. The first alternative is to repeat what was done at the previous level, i.e., to simply check the stopping criterion on each of the four candidate solutions. If we are thinking of extending the procedure to higher dimensions, this approach is clearly inefficient due to the exponential growth in the number of candidates. The second alternative, which is the one we are going to use, is to try to extract a little more of the information contained in ${\bf P}_{\bf 1}$, ${\bf P}_{\bf 2}$ and ${\bf P}_{\bf 3}$ in order to reduce the number of candidate solutions arising from the level being scanned. 

With this latter goal in mind, we begin by observing that the condition $|Range_3^{11}|=1$ implies that all possible candidate solutions for the current level must be in the form $\bar{\mathbf{x}}=(?,?,1)$ (such a point will be called a {\em partial candidate solution} to {\bf UKP}). Then, the restriction $x_3 = 1$ can be imposed on the original formulation, thus obtaining a new problem of a smaller dimension. Hereafter, the resulting problem will be called the {\em reduced problem}, and we will denote it by ${\bf UKP}|_{\bar{\mathbf{x}}}$. In our case, the reduced problem turns out to be:
\[
({\bf UKP}|_{\bar{\mathbf{x}}}) \ \ \mathrm{max} \ z(x_1,x_2,1) = 9x_1 + 3x_2 + 8 \ \ \mathrm{s.t.} \ \ 10x_1 + 5x_2 \leq 5, \ (x_1,x_2) \in \Z^2_+.
\]

Now, the general procedure can be applied to the reduced problem: recalculate the projections ${\bf P}_{\bf 1}$ and ${\bf P}_{\bf 2}$, and re-examine level $11$ in order to determine the new sets $Range_1^{11}$ and $Range_2^{11}$.

Before recalculating ${\bf P}_{\bf 1}$ and ${\bf P}_{\bf 2}$ explicitly and carrying on with the example, let us open a parenthesis here to enumerate the 4 alternatives that may hold depending on the cardinality of the new sets $Range_1^{11}$ and $Range_2^{11}$. We will also explain how to proceed in each situation. For convenience in the exposition, the set consisting of the variables that have not yet been fixed will be called {\em active variables} ({\bf AV}). In our case, ${\bf AV} = \{x_1,x_2\}$. 

\begin{enumerate}
\item \label{case:1} {\bf if $\mathbf{|Range_j^{11}| = 1}$ for all $\mathbf{j}$ such that $\mathbf{x_j \in}$ AV}. This means that there exist values $a,b \in \Z$ such that $Range_1^{11}=\{a\}$ and $Range_2^{11}=\{b\}$. Then, we can assert that, if there existed a feasible solution for this value of the maximand, it should be in the form $\bar{\mathbf{x}}=(a,b,1)$. The search process finishes if the resulting point satisfies the stopping criterion. Otherwise, given that $\bar{\mathbf{x}}=(a,b,1)$ is the only candidate solution arising from this level, we can conclude that $z_{\bf UKP} < 11$. In the latter case, before proceeding to the next level and continuing with the search process, we first check whether the variables $\bar{\mathbf{x}}^{best}$ and $z^{best}$ can be updated.

\item \label{case:2} {\bf if $\mathbf{|Range_j^{11}| = 0}$ for at least one $\mathbf{j}$ such that $\mathbf{x_j \in}$ AV}. In this case, there is no integer value that can be assigned to, at least, one of the non-fixed coordinates of $\bar{\mathbf{x}}=(?,?,1)$. Therefore, we can conclude that $z_{\bf UKP} < 11$. Then, the original problem is reconsidered and the search process is restarted at level $10$. 

\item \label{case:3} {\bf if $\mathbf{|Range_j^{11}| = 1}$ for at least one $\mathbf{j}$ such that $\mathbf{x_j \in}$ AV (but not all)}. Without loss of generality, let us suppose that $|Range_1^{11}| = 1$, i.e., there exists a value $a \in \Z$ such that $Range_1^{11}=\{a\}$. Then, the partial candidate solution, the set of active variables, and the reduced problem can be updated as follows: 
\[
\bar{\mathbf{x}}=(a,?,1), \ {\bf AV} =\{x_2\}, \ \mathrm{and} \
({\bf UKP}|_{\bar{\mathbf{x}}}) \ \ \mathrm{max} \ z(a,x_2,1) = 3x_2 + 8 + 9a \ \ \mathrm{s.t.} \ \ 5x_2 \leq 5 - 10a, \ x_2 \in \Z_+.
\]
In this way, the original problem is further reduced in size, and the process can be continued (at the current level) by recalculating the projection ${\bf P}_{\bf 2}$ of ${\bf UKP}|_{\bar{\mathbf{x}}}$, and by performing the same four-step analysis that is being used here.
  
\item \label{case:4} {\bf if $\mathbf{|Range_j^{11}| > 1}$ for all $\mathbf{j}$ such that $\mathbf{x_j \in}$  AV}. In this case, we proceed in the following manner. Firstly, we choose one of the active variables of the problem using some criterion, say $x_s$, and create new partial candidate solutions by assigning the $r^{th}$ value contained in the set $Range_s^{11}$, $1\leq r \leq |Range_s^{11}|$, to the $s^{th}$ component of $\bar{\mathbf{x}}$. By abuse of notation, we will also write $\bar{\mathbf{x}}$ to denote the new partial candidate solutions created in this manner. Secondly, we add all the partial candidate solutions constructed in the previous step to the set of partial candidate solutions to be analysed ({\bf L}). Thirdly, using some criterion, we extract one of the partial candidate solutions added to {\bf L}, say $\bar{\mathbf{x}}$, and calculate the reduced problem associated to it (${\bf UKP}|_{\bar{\mathbf{x}}}$) and redefine {\bf AV} as the set of non-fixed components of $\bar{\mathbf{x}}$. Finally, the search process is continued by recalculating the sets ${\bf P}_{\bf j}$ and $Range_j^{11}$ of ${\bf UKP}|_{\bar{\mathbf{x}}}$ (for the variable $x_j$ that has not yet been fixed), and by performing the same four-step analysis that is being used here. Note that, if the current problem does not produce any optimal solution, it is necessary to analyse the solution space generated by the remaining partial candidate solutions contained in {\bf L} before concluding that $11$ is not the optimal level of $z(\mathbf{x})$. If any optimal solution is reached, the procedure terminates; otherwise, the original problem is reconsidered and the search process is restarted at level $10$.
\end{enumerate}    

Having established the 4 alternatives that may hold depending on the cardinality of the new sets $Range_1^{11}$ and $Range_2^{11}$, let us now come back to the example. In our case, the projections ${\bf P}_{\bf 1}$ and ${\bf P}_{\bf 2}$ of the reduced problem turns out to be (see figure \ref{fig:4}):
\[
{\bf P}_{\bf 1} =  \Bigr\{ \ (x_1,z) \in \R^2 \ : \ x_1 \in \Bigr[0,\frac{1}{2} \Bigr], \ 9x_1 + 8 \leq z \leq \frac{15}{5} x_1 + \frac{55}{5} \ \Bigr\},
\]
\[
{\bf P}_{\bf 2} =  \Bigr\{ \ (x_2,z) \in \R^2 \ : \ x_2\in [0,1], \ 3 x_2 + 8 \leq z \leq -\frac{15}{10} x_2 + \frac{125}{10} \ \Bigr\}.
\]

\newpage
\begin{figure}[!h]
\begin{center}
\includegraphics[scale=0.9]{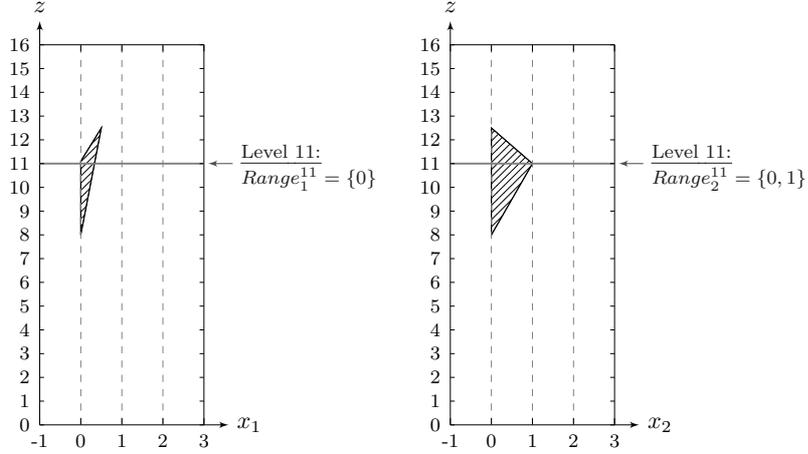}
\caption{projections ${\bf P}_{\bf 1}$ and ${\bf P}_{\bf 2}$ of ${\bf UKP}|_{(?,?,1)}$ crossed by level 11}
\label{fig:4}
\end{center}
\end{figure}

From figure \ref{fig:4}, it can be observed that the range of integer values that can be assigned to each of the remaining active variables is given by: $Range_1^{11}=\{0\}$ and $Range_2^{11}=\{0,1\}$ (case \ref{case:3}). Therefore, the partial candidate solution, the set of active variables, and the reduced problem can be updated as follows: 
\[
\bar{\mathbf{x}}=(0,?,1), \ {\bf AV}=\{x_2\}, \ \mathrm{and} \ ({\bf UKP}|_{\bar{\mathbf{x}}}) \ \ \mathrm{max} \ z(0,x_2,1) = 3x_2 + 8 \ \ \mathrm{s.t.} \ \ 5x_2 \leq 5, \ x_2 \in \Z_+.
\]

The projection ${\bf P}_{\bf 2}$ of ${\bf UKP}|_{\bar{\mathbf{x}}}$ is then recalculated in an attempt to obtain tighter bounds for the set $Range_2^{11}$ (see figure \ref{fig:5}):
\[
{\bf P}_{\bf 2} =  \Bigr\{ \ (x_2,z) \in \R^2 \ : \ x_2 \in [0,1], \ 3 x_2 + 8 \leq z \leq 3 x_2 + 8 \ \Bigr\}.
\]

\begin{figure}[!h]
\begin{center}
\includegraphics[scale=0.9]{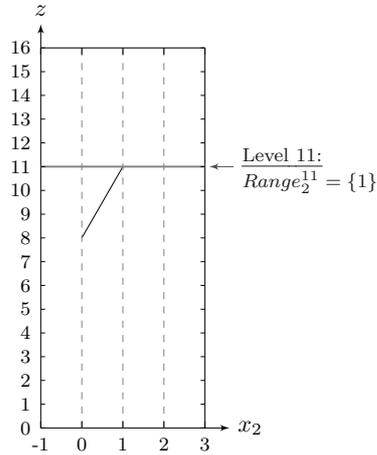}
\caption{projection ${\bf P}_{\bf 2}$ of ${\bf UKP}|_{(0,?,1)}$ crossed by level 11}
\label{fig:5}
\end{center}
\end{figure}

From figure \ref{fig:5} it follows that: $Range_2^{11}=\{1\}$ (case \ref{case:1}). This means that $\bar{\mathbf{x}}=(0,1,1)$ is the unique candidate solution capable of reaching level $11$. Then, given that $\bar{\mathbf{x}}$ satisfies the stopping criterion, we can conclude that it is an optimal solution to {\bf UKP}. $\lhd$ \\

The algorithm to be described in this paper is the generalization of the above procedure adapted to higher dimensions. The following outline summarizes how the proposed algorithm works. Given a PILP problem, the PSA-{\em pilp} algorithm starts by calculating the orthogonal projection associated to each variable $x_j$, and by identifying the set of all possible optimal objective values, say $cod(z(\mathbf{x}))=\{z_{1}, \dots, z_{k}\}$. Then, it begins the search for the optimal solution by considering, one by one in decreasing order of value, each of the elements of the $cod(z(\mathbf{x}))$ set. Every time a new level $z_{i} \in cod(z(\mathbf{x}))$ is selected, the algorithm utilizes the information contained in the sets $Range_j^{z_{i}}$ to fix the value of some of the variables, and thus to reduce the size of the original problem. The procedure is then continued by recalculating the sets ${\bf P}_{\bf j}$ and $Range_j^{z_{i}}$ of the reduced problem for the variables that have not yet been fixed ({\em active variables}). A number of candidate solutions is constructed for each considered level $z_i$ by applying this argument systematically.

The search process ends (stopping criterion) when a {\em feasible candidate solution $\bar{\mathbf{x}}$ satisfying the condition $z(\bar{\mathbf{x}})=z_i$ is found when the algorithm is scanning level $z_i$}. Then, it can be concluded that $\bar{\mathbf{x}}$ is a global optimum to the proposed PILP problem.\\

As can be seen in this outline, and also in the previous example, the proposed algorithm differs from the state-of-the-art techniques in three aspects: (i) it guides the search for the optimal solution by generating candidate solutions tailored to specific values of the objective function; (ii) it systematically reduces the number of variables in the original problem for each considered level; and (iii) it does not add any additional constraint to the initial formulation. Concerning the second point, it is worth noting that, while in the case of branch-and-bound-based algorithms the number of variables that can be fixed in each iteration of the procedure (for each node in the search tree) oscillates between $0$ and $1$, in the case of the PSA-{\em pilp} algorithm this figure ranges between $1$ and $|{\bf AV}|$. Furthermore, as we will see later on in Section \ref{sec:5}, the computational experiments performed on instances of the 0-1MKP reveal that the percentage of variables that are fixed to their optimal value in the {\em first iteration} of the PSA-{\em pilp} algorithm at the optimal level, rise to more than $97\%$ of the total variables.

\subsection{Practical aspects}
\label{subsec:3.3}

To conclude this section, let us give some precisions about how to calculate projections in the case of general PILP problems. This is motivated by the fact that, unlike what happened in Section \ref{subsec:3.2} for the UKP, in the case of general PILP problems it is usually too expensive---or even impossible---to derive explicit formulas for $P_j^{up}(x_j)$ and $P_j^{low}(x_j)$ for all $x_j$ in the domain of the definition of ${\bf P}_{\bf j}$. It then becomes necessary to identify which part of the information provided by the projections is dispensable and which part is strictly necessary for executing the PSA-{\em pilp} algorithm. 

It is easy to see that the only information that is absolutely necessary for executing the PSA-{\em pilp} algorithm is that given by the points $(e_j, P_j^{up}(e_j))$ and $(e_j, P_j^{low}(e_j))$, where $e_j$ takes on all possible {\em integer} values in the domain of the definition of ${\bf P}_{\bf j}$. From a theoretical point of view, this observation makes it possible to compute the set of projections for every instance of a PILP problem in a finite number of steps. In practice, however, it may take a long time for the PSA-{\em pilp} algorithm to converge to the optimum if the coefficients $P_j^{up}(e_j)$ and $P_j^{low}(e_j)$ are calculated exactly. It is then natural to, in addition to the previous simplification, approximate some of these values in order to reduce the number of operations even further. 
 
To fix ideas, the following outline details the steps of the procedure suggested above applied to the computation of the set of upper projections, $\{ P_j^{up}(e_j) \ | \ e_j$ {\it integer in the domain of the definition of} ${\bf P}_{\bf j} \}$, for the subclass of PILP problems in which all variables are restricted to be $0$ or $1$. This type of problems is known as {\em binary integer linear programming} (BILP). A similar approach can be applied to determine the lower projections of a BILP problem as well as to calculate the upper and lower projections for more complex PILP problems.

\begin{itemize}
\item {\em Phase 1.} The integer requirements of the original BILP problem are relaxed and the associated maximization LP program is solved by using the Simplex method. Let $\mathbf{x}^*$ denote the optimal solution to the LP-relaxation, and let $z_{\bf LP}$ denote its respective optimal objective value. It is easy to see that, if the $j^{th}$ component of $\mathbf{x}^*$ yields an integer value $e_j \in \{0,1\}$, this automatically implies that $P_j^{up}(e_j) = z_{\bf LP}$. In other words, assuming that the problem we are trying to solve had $n$ variables, this first operation would allow us to calculate, in the best-case scenario, up to $n-1$ of the total $2n$ coefficients $P_j^{up}(e_j)$, $e_j \in \{0,1\}$.
 
\item {\em Phase 2.} For each of the remaining values, $P_j^{up}(e_j)$, that were not able to be computed in the previous phase, the additional constraint $x_j~\leq~0$ (if $e_j=0$) or $x_j \geq 1$ (if $e_j=1$) is added to the bottom of the optimal Simplex tableau obtained in the previous step, and the dual Simplex algorithm is then used to restore primal feasibility and to compute an upper bound for $P_j^{up}(e_j)$.
\end{itemize}


\section{Scheme and correctness of the PSA-{\em pilp} algorithm} 
\label{sec:4}

In this section, we present the scheme of the algorithm. In order to do that, we assume that there exists a procedure that permits to compute the set of projections for every instance of a PILP problem in a finite number of operations. The same assumption will be made on Section \ref{subsec:4.2} to prove the finiteness and the correctness of the algorithm.

\subsection{Scheme of the algorithm}
\label{subsec:4.1}

As we mentioned before, the strategy of the PSA-{\em pilp} algorithm is to sweep across the set of all possible optimal objective values of the problem, say $cod(z(\mathbf{x}))=\{z_1, \dots, z_k\}$, and to use the information given by the sets $Range_j^{z_i}$, $j$ such that $x_j \in {\bf AV}$, to generate a finite number of candidate solutions tailored to each of the selected $z$-values. The search process finishes when a candidate solution which meets the stopping condition is found. 

In order to clarify the exposition of the algorithm, we will divide the procedure into two parts, thus introducing a slight modification in comparison to the example presented in Section \ref{subsec:3.2}. On the one hand, we will introduce the \texttt{Inspect\_Level} algorithm, which is the responsible for generating the whole set of candidate solutions associated to a given level. On the other hand, we will present the \texttt{Main} algorithm, which is the responsible for performing the parallel shifts in the functional value in the direction of a reduction of the maximand, and for checking the stopping criterion on the set of candidate solutions provided by the \texttt{Inspect\_Level} algorithm. The scheme of the algorithms is as follows:

\begin{center}
\begin{tabbing} 
\hspace{1cm}\=\hspace{1cm}\=\hspace{1cm}\=\hspace{1cm}\=\hspace{1cm}\=\hspace{1cm}\=\hspace{1cm}\= \\
\rule[0.1cm]{10cm}{0.01cm}\\
{\bf Algorithm 3} The \texttt{Main} Algorithm\\
\rule[0.1cm]{10cm}{0.01cm}\\
{\bf Input:} $({\bf PILP}) \ \ \mathrm{max} \ z(\mathbf{x}) = \mathbf{c}^\mathrm{T}\mathbf{x} + h \ \ \mathrm{s.t.} \ \ \mathbf{A}\mathbf{x} \leq \mathbf{b}, \ \mathbf{x} \in \Z_+^n$\\
\\
{\bf Assumption:} $\mathbf{c} \in \Z^n, h \in \Z$\\
\\
{\bf Output:} optimal solution to {\bf PILP}, or detects infeasibility\\
\\
{\bf Variables:}\\ 
$\bar{\mathbf{x}} = (\bar{x}_1, \dots, \bar{x}_n) \in \Z^n$ (candidate solution)\\
$cod(z(\mathbf{x})) \subseteq \Z$ (codomain of $z(\mathbf{x})$ over the feasible domain of {\bf PILP})\\
$z \in  \Z$ (level being scanned)\\
$z^{best}\in \Z$ (lower bound)\\
$\bar{\mathbf{x}}^{best} \in \Z^n$ (incumbent solution)\\
${\bf CS}_z^{\bf PILP}$ (set of candidate solutions to {\bf PILP} produced by the \texttt{Inspect\_Level} algorithm at level $z$)\\
${\bf P}_{\bf j}$ (projection produced by $z(\mathbf{x})$ onto the $(x_j,z)$ plane)\\
\\
{\bf 0. Initialize.} \\
\> compute ${\bf P}_{\bf j}$ for $j =1, \ldots, n$\\
\> compute $cod(z(\mathbf{x}))$ \\
\> set $z$ to the largest element in $cod(z(\mathbf{x}))$\\
\> set $z^{best}$ to the smallest element in $cod(z(\mathbf{x}))$\\
\\
{\bf 1. Loop.}\\
\> {\bf while} $z > z^{best}$ {\bf or} ($z == z^{best}$ {\bf and} $\bar{\mathbf{x}}^{best} == \mathrm{NULL}$) {\bf do}\\
\> \> {\bf 1.1. Inspection.} \\
\> \> \> set ${\bf CS}_z^{\bf PILP} = \texttt{Inspect\_Level} \big( \ {\bf PILP}, z, \big\{ {\bf P}_{\bf j} \big\}_{j=1, \dots, n} \ \big)$\\
\\
\> \> {\bf 1.2. Check.}\\
\> \> \> {\bf for all} $\bar{\mathbf{x}} \in {\bf CS}_z^{\bf PILP}$ {\bf do}  \\
\> \> \> \> {\bf if} $\bar{\mathbf{x}}$ is a feasible solution to {\bf PILP} {\bf and} $z(\bar{\mathbf{x}})== z$\\
\> \> \> \> \> {\bf return }$\bar{\mathbf{x}}$ is an optimal solution to {\bf PILP}\\
\> \> \> \> {\bf else if} $\bar{\mathbf{x}}$ is feasible {\bf and} $z(\bar{\mathbf{x}}) > z^{best}$\\
\> \> \> \> \> set $z^{best}= z(\bar{\mathbf{x}})$\\
\> \> \> \> \> set $\bar{\mathbf{x}}^{best} = \bar{\mathbf{x}}$\\
\> \> \> \> {\bf end if}\\
\> \> \> {\bf end for}\\
\\
\> \> {\bf 1.3. Next level.}\\
\> \> \> set $z = z - 1$\\
\> {\bf end while}\\
\\
{\bf 2. Output.}\\
\> {\bf if} $\bar{\mathbf{x}}^{best} \ne \mathrm{NULL}$\\
\> \> {\bf return} $\bar{\mathbf{x}}^{best}$ is an optimal solution to {\bf PILP}\\
\> {\bf else}\\
\> \> {\bf return} {\bf PILP} is infeasible\\
\> {\bf end if}
\end{tabbing}
\end{center}
\rule[0.1cm]{10cm}{0.01cm}

\begin{center}
\begin{tabbing} 
\hspace{1cm}\=\hspace{1cm}\=\hspace{1cm}\=\hspace{1cm}\=\hspace{1cm}\=\hspace{1cm}\= \\
\rule[0.1cm]{10cm}{0.01cm}\\
{\bf Algorithm 4} The \texttt{Inspect\_Level} Algorithm\\
\rule[0.1cm]{10cm}{0.01cm}\\
{\bf Input:} {\bf PILP} problem, $z$ (level to be scanned), $\big\{ {\bf P}_{\bf j} \big\}_{j=1, \dots, n}$ (set of projections associated to {\bf PILP})\\
\\
{\bf Output:} ${\bf CS}_z^{\bf PILP}$ (set of candidate solutions to {\bf PILP} arising from level $z$)\\
\\
{\bf Variables:} \\
$\bar{\mathbf{x}} = (\bar{x}_1, \dots, \bar{x}_n) \in \Z^n$ (partial candidate solution)\\
{\bf Prob} (problem being analysed)\\
{\bf AV} (set of variables that have not yet been fixed)\\
$\big\{ {\bf P}_{\bf j} \big\}_{j \, : \, x_j \in {\bf AV}}$ (set of projections associated to {\bf Prob})\\
{\bf L} (set of partial candidate solutions to be analysed)\\
${\bf CS}_z^{\bf PILP}$ (set of candidate solutions to {\bf PILP} arising from level $z$)\\
\\
{\bf 0. Initialize.} \\
\> set ${\bf Prob} = {\bf PILP}$\\
\> set ${\bf AV} = \{x_1, \dots, x_n\}$\\
\> set $\big\{ {\bf P}_{\bf j} \big\}_{j \, : \, x_j \in {\bf AV}} = \big\{ {\bf P}_{\bf j} \big\}_{j=1, \dots, n}$\\
\> set ${\bf CS}_z^{\bf PILP} = \{\}$\\
\> set $\bar{x}_j = \mathrm{NULL}$ for all $j=1, \dots,n$\\
\> set ${\bf L} = \{\}$\\
\\
{\bf 1. Inspection.} \\
\> compute $Range_j^{z}$ for all $j$ such that $x_j \in {\bf AV}$\\
\> {\bf if} $|Range_j^{z}|>0$ for all $j$ such that $x_j \in {\bf AV}$ \\
\> \> {\bf if} $\exists j$ such that $x_j \in {\bf AV}$ {\bf and} $|Range_j^{z}|==1$\\
\> \> \> {\bf for all} $j$ such that $x_j \in {\bf AV}$ {\bf and} $|Range_j^{z}|==1$ {\bf do} {\scriptsize \hspace{0.5cm}/*$Range_j^{z}=\{r_j\}$*/}\\
\> \> \> \> set $\bar{x}_j=r_j$\\
\> \> \> \> set ${\bf AV} = {\bf AV} - \{x_j\}$\\
\> \> \> {\bf end for }\\
\> \> \> {\bf if} ${\bf AV} == \emptyset$\\
\> \> \> \> set ${\bf CS}_z^{\bf PILP} = {\bf CS}_z^{\bf PILP} \cup \{\bar{\mathbf{x}}\}$\\
\> \> \> {\bf else}\\
\> \> \> \> set ${\bf Prob} = {\bf PILP}|_{\bar{\mathbf{x}}}$ \\ 
\> \> \> \> compute the set of projections associated to {\bf Prob}: $\big\{ {\bf P}_{\bf j} \big\}_{j \, : \, x_j \in {\bf AV}}$\\
\> \> \> \> {\bf goto} Step 1\\
\> \> \> {\bf end if}\\
\> \> {\bf else } {\scriptsize \hspace{0.5cm}/*$|Range_j^{z}| > 1 \, \forall \, j \ / \ x_j \in {\bf AV}$*/}\\
\> \> \> choose $j$ such that $x_j \in {\bf AV}$ using some criterion {\scriptsize \hspace{0.5cm}/*$Range_j^{z}=\{r_1,\ldots,r_{|Range_j^{z}|}\}$*/}\\
\> \> \> {\bf if} $|{\bf AV}|==1$\\
\> \> \> \> {\bf for} $i=1$ {\bf to} $|Range_j^{z}|$ {\bf do}\\
\> \> \> \> \> set $\bar{x}_j = r_i$\\
\> \> \> \> \> set ${\bf CS}_z^{\bf PILP} = {\bf CS}_z^{\bf PILP} \cup \{\bar{\mathbf{x}}\}$\\
\> \> \> \> {\bf end for }\\
\> \> \> {\bf else}\\
\> \> \> \> {\bf for } $i=1$ {\bf to} $|Range_j^{z}|$ {\bf do}\\
\> \> \> \> \> set $\bar{x}_j = r_i$\\
\> \> \> \> \> set ${\bf L} = {\bf L} \cup \{\bar{\mathbf{x}}\}$\\
\> \> \> \> {\bf end for }\\
\> \> \> {\bf end if}\\
\> \> {\bf end if}\\
\> {\bf end if} \\
\\
{\bf 2. Update.} \\
\> {\bf if} ${\bf L} \ne \emptyset $ \\
\> \> choose $\bar{\mathbf{x}} \in {\bf L}$ using some criterion\\
\> \> set ${\bf L} = {\bf L} - \{\bar{\mathbf{x}}\}$\\
\> \> set ${\bf AV} =$ non-fixed components of $\bar{\mathbf{x}}$\\
\> \> set ${\bf Prob} = {\bf PILP}|_{\bar{\mathbf{x}}}$\\ 
\> \> compute the set of projections associated to {\bf Prob}: $\big\{ {\bf P}_{\bf j} \big\}_{j \, : \, x_j \in {\bf AV}}$\\
\> \> {\bf goto} Step 1\\
\> {\bf else}\\
\> \> {\bf return} ${\bf CS}_z^{\bf PILP}$\\
\> {\bf end if}
\end{tabbing}
\end{center}
\rule[0.1cm]{10cm}{0.01cm}

\subsection{Correctness of the PSA-{\em pilp} algorithm} 
\label{subsec:4.2}

This section is intended to prove that the algorithm finds an optimal solution, or detects infeasibility, in a finite number of iterations. Before we come to the theorem we will enunciate two lemmas.

\begin{lemma} \label{lemma:1}
Let $\mathbf{\tilde x} = ({\tilde x}_1, \dots, {\tilde x}_n)$ be a feasible solution to {\bf PILP} (\ref{def:ilp}) such that $z(\mathbf{\tilde x}) = {\tilde z}$. Then, ${\tilde x}_j \in Range_j^{\tilde z}$ for all $j= 1, \dots, n$.
\end{lemma}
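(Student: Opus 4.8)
The plan is to unwind the definitions of $Range_j^{\tilde z}$, $\mathbf{P_j}$, $l_j$, $u_j$, $P_j^{low}$ and $P_j^{up}$, and to observe that each bounding inequality they impose is certified by the single feasible point $\mathbf{\tilde x}$ itself. Recall that $Range_j^{\tilde z}=\{\,r\in\Z:(r,\tilde z)\in\mathbf{P_j}\,\}$, so membership $\tilde x_j\in Range_j^{\tilde z}$ amounts to three facts: (i) $\tilde x_j\in\Z$; (ii) $\tilde x_j\in[l_j,u_j]$; and (iii) $P_j^{low}(\tilde x_j)\le\tilde z\le P_j^{up}(\tilde x_j)$. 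Fact (i) is immediate because $\mathbf{\tilde x}\in\Z_+^n$ by hypothesis.

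For fact (ii), note that since $\mathbf{\tilde x}$ is feasible for \textbf{PILP} it satisfies $\mathbf{A\tilde x}\le\mathbf{b}$ and $\mathbf{\tilde x}\in\R_+^n$, hence it is feasible for the LP-relaxation, which by the standing boundedness assumption has a nonempty bounded feasible region; therefore the minimization and maximization LPs in (\ref{def:lj}) and (\ref{def:uj}) attain finite optima $l_j$ and $u_j$, and evaluating the objective $x_j$ at the feasible point $\mathbf{\tilde x}$ gives $l_j\le\tilde x_j\le u_j$. For fact (iii), I would argue identically one dimension lower: $\mathbf{\tilde x}$ satisfies the extra equality constraint $x_j=\tilde x_j$ together with $\mathbf{A\tilde x}\le\mathbf{b}$ and $\mathbf{\tilde x}\in\R_+^n$, so it is feasible for both LPs defining $P_j^{low}(\tilde x_j)$ and $P_j^{up}(\tilde x_j)$ in (\ref{def:plow}) and (\ref{def:pup}); since these LPs carry the objective $z(\mathbf{x})=\mathbf{c}^{\mathrm T}\mathbf{x}+h$, weak-type comparison of the optimum with the value at the feasible point $\mathbf{\tilde x}$ yields $P_j^{low}(\tilde x_j)\le z(\mathbf{\tilde x})=\tilde z$ and $\tilde z=z(\mathbf{\tilde x})\le P_j^{up}(\tilde x_j)$.

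Combining (ii) and (iii) gives $(\tilde x_j,\tilde z)\in\mathbf{P_j}$ by the defining condition (\ref{def:proj}), and together with (i) this is exactly $\tilde x_j\in Range_j^{\tilde z}$. Since $j\in\{1,\dots,n\}$ was arbitrary, the claim follows.

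The argument is essentially bookkeeping; the only point requiring care is the well-definedness of the auxiliary LPs, i.e.\ that $l_j,u_j,P_j^{low}(\tilde x_j),P_j^{up}(\tilde x_j)$ are finite real numbers rather than $\pm\infty$. This is precisely where the existence of the feasible solution $\mathbf{\tilde x}$ and the global boundedness assumption on the feasible region of (\ref{def:ilp}) (hence of (\ref{def:lp})) are used, so I would state that observation explicitly before invoking the optimality inequalities. Beyond that, no genuine obstacle is expected.
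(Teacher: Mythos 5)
Your proof is correct and follows the same route as the paper, which simply states that the result follows from the definitions of $Range_j^{z}$, ${\bf P}_{\bf j}$, $P_j^{low}$ and $P_j^{up}$ together with the feasibility of $\mathbf{\tilde x}$; you have merely spelled out the bookkeeping (integrality, $l_j\le\tilde x_j\le u_j$, and the weak-optimality comparisons for the bounding LPs) that the paper leaves implicit.
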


\begin{proof} 
The result follows from the definitions (\ref{def:range}), (\ref{def:proj}), (\ref{def:plow}) and (\ref{def:pup}), and from the fact that $\mathbf{\tilde x}$ is a feasible solution to {\bf PILP}. $\square$
\end{proof}

Note that, when the projections are restricted to the optimal level of the problem, say $z_{\bf PILP}$, lemma \ref{lemma:1} asserts that every optimal solution to {\bf PILP} can be reconstructed from the information provided by the sets $Range_j^{z_{\bf PILP}}$, $j = 1, \dots, n$.

\begin{lemma} \label{lemma:2}
Let $\mathbf{x}^* =(x_1^*, \dots, x_n^*)$ be an optimal solution to {\bf PILP} (\ref{def:ilp}), and let $\bar{\mathbf{x}}$ be the partial candidate solution defined by $\bar{\mathbf{x}} = (?, \dots, ?, x^*_k, \dots, x^*_n), \ k > 1$. Then, $\mathbf{x}^*$ is optimal to ${\bf PILP}|_{\bar{\mathbf{x}}}$. Furthermore, the problems {\bf PILP} and ${\bf PILP}|_{\bar{\mathbf{x}}}$ have both the same optimal objective value.
\end{lemma}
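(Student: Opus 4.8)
The plan is to argue directly from the definition of the reduced problem. Recall from Section~\ref{subsec:3.2} that ${\bf PILP}|_{\bar{\mathbf{x}}}$ is obtained from {\bf PILP} by substituting the fixed values, i.e.\ by adjoining the equalities $x_j = x_j^*$ for $j = k, \dots, n$; writing $S$ for the feasible region of {\bf PILP}, its feasible region is therefore
\[
S|_{\bar{\mathbf{x}}} \ = \ \bigl\{\, \mathbf{x} \in \Z_+^n \ : \ \mathbf{A}\mathbf{x} \leq \mathbf{b}, \ x_j = x_j^* \ \mathrm{for} \ j = k, \dots, n \,\bigr\} \ \subseteq \ S,
\]
and its objective function is $z(\mathbf{x})$ restricted to $S|_{\bar{\mathbf{x}}}$ (with the fixed coordinates substituted in), so that any $\mathbf{x} \in S|_{\bar{\mathbf{x}}}$ has the same objective value in {\bf PILP} and in ${\bf PILP}|_{\bar{\mathbf{x}}}$.

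First I would check feasibility: $\mathbf{x}^*$ is feasible to {\bf PILP} by hypothesis, and it trivially satisfies $x_j^* = x_j^*$ for every $j \geq k$, hence $\mathbf{x}^* \in S|_{\bar{\mathbf{x}}}$; in particular $S|_{\bar{\mathbf{x}}} \neq \emptyset$, so ${\bf PILP}|_{\bar{\mathbf{x}}}$ is a well-posed (feasible, bounded) maximization problem. Next I would transfer optimality downward along the inclusion $S|_{\bar{\mathbf{x}}} \subseteq S$: since $\mathbf{x}^*$ is optimal to {\bf PILP} we have $z(\mathbf{x}^*) \geq z(\mathbf{x})$ for every $\mathbf{x} \in S$, and a fortiori for every $\mathbf{x} \in S|_{\bar{\mathbf{x}}}$; combined with $\mathbf{x}^* \in S|_{\bar{\mathbf{x}}}$ this shows $\mathbf{x}^*$ is optimal to ${\bf PILP}|_{\bar{\mathbf{x}}}$. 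Finally, evaluating the (common) objective at $\mathbf{x}^*$ gives $z_{{\bf PILP}|_{\bar{\mathbf{x}}}} = z(\mathbf{x}^*) = z_{\bf PILP}$, which is the ``furthermore'' claim.

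I do not expect a genuine obstacle here: the argument is just the observation that an optimum of a problem which happens to lie inside a subregion is also an optimum of the subproblem. The only points requiring a little care are purely bookkeeping --- making sure the feasible-set inclusion is stated in the correct direction, and that the objective of ${\bf PILP}|_{\bar{\mathbf{x}}}$ is literally $z$ evaluated on the slice $\{x_j = x_j^*\}_{j \geq k}$, so that the same point yields the same value in both problems. Boundedness of $S$ is assumed throughout the paper, so the optimal values invoked above all exist and the proof is complete.
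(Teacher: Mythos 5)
Your proof is correct and is exactly the argument the paper gives, just spelled out in full: the paper's proof is the one-line observation that $\mathbf{x}^*$ is feasible for both {\bf PILP} and ${\bf PILP}|_{\bar{\mathbf{x}}}$, and your write-up simply makes the feasible-set inclusion and the transfer of optimality explicit. No difference in approach and no gaps.
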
 

\begin{proof} 
The results follow from the fact that $\mathbf{x}^*$ is feasible for both {\bf PILP} and ${\bf PILP}|_{\bar{\mathbf{x}}}$. $\square$
\end{proof}

\begin{theorem}
The PSA-{\em pilp} algorithm converges to an optimal solution, or detects infeasibility, in a finite number of steps.
\end{theorem}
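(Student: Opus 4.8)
The plan is to establish the two halves of the statement separately: \emph{finiteness} (the algorithm always halts) and \emph{correctness} (upon halting it returns an optimal solution or correctly reports infeasibility). Throughout I use the standing assumption of Section~\ref{subsec:4.2} that the set of projections, and hence each $Range_j^z$, can be computed in finitely many operations, together with the boundedness of the feasible region.

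\textbf{Finiteness.} First I would note that, since the feasible region of the LP\nobreakdash-relaxation is bounded and $cod(z(\mathbf{x}))\subseteq\Z$, the set $cod(z(\mathbf{x}))$ is a finite set of integers; because $z$ is decremented by $1$ in Step~1.3 of \texttt{Main} and $z^{best}$ is non-decreasing and bounded below by the smallest level, the \texttt{while} loop of \texttt{Main} performs only finitely many iterations. It then remains to show that each call to \texttt{Inspect\_Level} halts. For this I would exhibit a termination measure for the ``\textbf{goto} Step~1'' recursion: every such jump is immediately preceded either by the fixing of at least one active variable (the branch in Step~1 where some $|Range_j^z|=1$) or by an \textbf{Update} step that pops a partial candidate $\bar{\mathbf{x}}$ from $\mathbf{L}$ and restarts with $\mathbf{AV}$ reset to its \emph{strictly smaller} set of non-fixed components. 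Since $|\mathbf{AV}|\le n$ strictly decreases along every branch, and each $Range_j^z$ is finite (it is a set of integers inside the bounded interval $[l_j,u_j]$), the execution unfolds as a finitely branching tree of depth at most $n$; hence $\mathbf{L}$ is eventually exhausted and the routine returns.

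\textbf{Correctness of \texttt{Inspect\_Level}.} The key claim is: \emph{for every level $z$ and every feasible solution $\mathbf{x}^\circ$ of {\bf PILP} with $z(\mathbf{x}^\circ)=z$, one has $\mathbf{x}^\circ\in{\bf CS}_z^{\bf PILP}$.} I would prove this by induction along the branch of the recursion that stays ``consistent'' with $\mathbf{x}^\circ$, i.e. along which every already-fixed coordinate of $\bar{\mathbf{x}}$ agrees with the corresponding coordinate of $\mathbf{x}^\circ$. The invariant maintained is that on this branch the current reduced problem ${\bf PILP}|_{\bar{\mathbf{x}}}$ is itself a PILP problem for which the restriction of $\mathbf{x}^\circ$ is feasible with objective value $z$; this is exactly the content of Lemma~\ref{lemma:2} (in the obvious more general form allowing any subset of fixed coordinates and any feasible, not only optimal, solution). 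Applying Lemma~\ref{lemma:1} to ${\bf PILP}|_{\bar{\mathbf{x}}}$ then gives $x_j^\circ\in Range_j^z$ for each still-active $j$, so: the dead-end case $|Range_j^z|=0$ never occurs on this branch; every forced assignment in the $|Range_j^z|=1$ case equals $x_j^\circ$, preserving consistency; and in the branching case (some $|Range_j^z|>1$) one of the newly created partial candidates is again consistent with $\mathbf{x}^\circ$, whether it is placed directly into ${\bf CS}_z^{\bf PILP}$ (when $|\mathbf{AV}|=1$) or pushed onto $\mathbf{L}$ for later processing. By the finiteness just proved this branch terminates, and at termination $\bar{\mathbf{x}}$ is fully determined; consistency forces $\bar{\mathbf{x}}=\mathbf{x}^\circ$, which is therefore an element of ${\bf CS}_z^{\bf PILP}$.

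\textbf{Conclusion via \texttt{Main}.} If {\bf PILP} is infeasible, then no candidate produced at any level is ever feasible, so $\bar{\mathbf{x}}^{best}$ stays $\mathrm{NULL}$ and Step~2 correctly reports infeasibility (the degenerate cases where the LP\nobreakdash-relaxation is empty or $cod(z(\mathbf{x}))$ is empty are caught at initialization). Otherwise let $z^*=z_{\bf PILP}$. For any level $z>z^*$ there is no feasible point of objective value $z$, so the ``\textbf{return}'' of Step~1.2 cannot fire there; it can only update $(z^{best},\bar{\mathbf{x}}^{best})$ to a feasible point, whose value is necessarily $\le z^*$, so $z^{best}\le z^*$ is invariant. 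Consequently the loop guard keeps the sweep alive until $z$ reaches $z^*$ --- or until $\bar{\mathbf{x}}^{best}$ already records a feasible point of value exactly $z^*$ (hence an optimum), in which case the guard fails precisely at $z=z^*$ and Step~2 returns it. When level $z^*$ is actually scanned, the claim above guarantees that some optimal solution $\mathbf{x}^*$ lies in ${\bf CS}_{z^*}^{\bf PILP}$; being feasible with $z(\mathbf{x}^*)=z^*$, it triggers the return in Step~1.2. In every case the algorithm halts after finitely many steps with a correct answer.

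\textbf{Main obstacle.} The delicate step is the \texttt{Inspect\_Level} claim: one must propagate the ``consistency with $\mathbf{x}^\circ$'' invariant simultaneously through the force-fixing, dead-end, branching and $\mathbf{L}$-management cases, coupling the induction to the termination measure so that the consistent branch is guaranteed to be fully expanded, and it is here that Lemmas~\ref{lemma:1} and~\ref{lemma:2} do the real work. By comparison, the \texttt{Main}-loop bookkeeping --- that the sweep cannot terminate above $z^*$ and cannot return an incorrect point above $z^*$ --- is routine once the loop guard and the two branches of Step~1.2 are read carefully.
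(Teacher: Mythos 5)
Your proof is correct, and it follows the same broad strategy as the paper --- finiteness plus the claim that the target solution survives into ${\bf CS}_z^{\bf PILP}$, with Lemmas~\ref{lemma:1} and~\ref{lemma:2} doing the work --- but it differs in two worthwhile respects. First, the paper proves the weaker statement that an \emph{optimal} solution $\mathbf{x}^*$ belongs to ${\bf CS}_{z_{\bf PILP}}^{\bf PILP}$, and does so by induction on the number of variables $n$: the base case is the one-variable problem, and the inductive step passes to the reduced problem ${\bf PILP}|_{\bar{\mathbf{x}}}$ (which has fewer variables) via Lemma~\ref{lemma:2} as stated, i.e.\ for optimal solutions only. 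You instead prove the stronger claim that \emph{every} feasible solution at level $z$ lands in ${\bf CS}_z^{\bf PILP}$, organizing the induction as an invariant (``consistency with $\mathbf{x}^\circ$'') propagated along one branch of the execution tree; this requires the mild generalization of Lemma~\ref{lemma:2} to arbitrary feasible solutions and arbitrary subsets of fixed coordinates, which is immediate but which you were right to flag explicitly since the paper's statement does not literally cover it. The two inductions are essentially the same computation viewed differently --- the paper's induction on $n$ is implicitly tracking your consistent branch --- but your formulation makes it cleaner to see why the dead-end case $|Range_j^z|=0$ and the bookkeeping of ${\bf L}$ never lose the target solution. Second, your termination argument (the measure $|{\bf AV}|$ strictly decreasing along a finitely branching tree of depth at most $n$, with each $Range_j^z$ finite because it consists of integers in the bounded interval $[l_j,u_j]$) is considerably more explicit than the paper's one-sentence appeal to ``a finite number of levels and a finite number of candidate solutions,'' and your analysis of the \texttt{Main} loop guard and of the legitimacy of the early return in Step~1.2 at levels above $z_{\bf PILP}$ fills in details the paper leaves implicit. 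Both arguments are sound; yours buys a stronger invariant and a more auditable termination proof at the cost of stating a generalized Lemma~\ref{lemma:2}, while the paper's induction on $n$ is shorter because it only ever needs to follow the single optimal solution.
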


\begin{proof} The finiteness of the algorithm follows from the fact that there is always a finite number of levels and a finite number of candidate solutions to be analysed. For this reason, the algorithm always stops after a finite number of iterations. In the case that the PILP problem is infeasible, the algorithm terminates returning this condition. 

To prove the correctness of the algorithm we need only to show that, if $\mathbf{x}^*=(x_1^*, \ldots, x^*_n)$ is an optimal solution to {\bf PILP} (\ref{def:ilp}) and $z_{\bf PILP}$ is the optimal objective value of the problem, then 
\[
\mathbf{x}^* \in {\bf CS}_{z_{\bf PILP}}^{\bf PILP} = \texttt{Inspect\_Level} \big( \ {\bf PILP}, z_{\bf PILP}, \big\{ {\bf P}_{\bf j} \big\}_{j=1, \dots, n} \ \big).
\]
We are going to prove this property by induction on the number of variables. 

For one-variable PILP problems the situation is as follows:
\begin{eqnarray*}
({\bf PILP}) \ \ \ \ \mathrm{maximize} \ \ z(x_1) = c_1x_1 + h \\
\mathrm{subject \ to} \ \ a_{11}x_1 \leq b_1\\
\vdots \hspace*{7 ex}\\
a_{m1}x_1\leq b_m\\
x_1 \in \Z_+ 
\end{eqnarray*}
We assume, without loss of generality, that $c_1 > 0$ and $h=0$. Let $[l_1,u_1], \ l_1, u_1 \in \R,$ be the feasible domain of the LP-relaxation of {\bf PILP}. Then, {\em the} optimal solution to {\bf PILP} is reached at $\mathbf{x}^* = (\lfloor u_1 \rfloor)$ yielding an objective value of $z_{\bf PILP}=c_1 \lfloor u_1 \rfloor$.
By applying the \texttt{Inspect\_Level} algorithm to {\bf PILP} restricted to level $z_{\bf PILP}$, it is easy to see that $Range_1^{z_{\bf PILP}} = \{ \lfloor u_1 \rfloor \}$. This implies ${\bf CS}_{z_{\bf PILP}}^{\bf PILP} = \{ \bar{\mathbf{x}}=(\lfloor u_1 \rfloor) \}$. Then, the theorem is true for every instance of a PILP problem with one variable.

Inductive step. Suppose that the result is valid for every PILP problem with $k$ variables, $k<n$. Let us now demonstrate that the property is also valid for every PILP problem with $n$ variables. Let {\bf PILP} be a PILP problem with $n$ variables satisfying (\ref{def:ilp}), and let $\mathbf{x}^*=(x_1^*, \ldots, x^*_n)$ be an optimal solution to {\bf PILP}. From lemma \ref{lemma:1}, it follows that $x^*_j \in Range_j^{z_{\bf PILP}} \ \forall j \in \{1,\ldots,n\}$. Then, by applying the \texttt{Inspect\_Level} algorithm to {\bf PILP} restricted to level $z_{\bf PILP}$, only one of the following alternatives holds: 

\begin{enumerate}
\item $\mathbf{\exists j \in \{1,\ldots,n\} \ / \ |Range_j^{z_{\bf PILP}}|=1}$. Let us suppose, without loss of generality, that $|Range_j^{z_{\bf PILP}}|~=~1$ $\forall j \in \{k,\ldots,n\}$ for some $k\geq 1$. That is, $Range_j^{z_{\bf PILP}}=\{x^*_j\} \ \forall j \in \{k,\ldots,n\}$.

\begin{enumerate}

\item If $k=1$, then $\mathbf{x}^* \in {\bf CS}_{z_{\bf PILP}}^{\bf PILP}$.

\item If $k>1$, the \texttt{Inspect\_Level} algorithm updates the partial candidate solution, the set of active variables, and the reduced problem in the following manner: 
\[
\bar{\mathbf{x}} = (?, \dots, ?, x^*_k, \dots, x^*_n), \ {\bf AV} = \{x_1, \dots, x_{k-1}\}, \ \mathrm{and} \ {\bf Prob}= {\bf PILP}|_{\bar{\mathbf{x}}}.
\]
It then recalculates $\big\{ {\bf P}_{\bf j} \big\}_{j \, : \, x_j \in {\bf AV}}$, and {\em restarts the process from step 1 until the algorithm ends}. We now observe that this last operation is equivalent to apply 
\[
{\bf CS}_{z_{\bf PILP}}^{\bf Prob} = \texttt{Inspect\_Level} \big( \ {\bf Prob}, z_{\bf PILP}, \big\{ {\bf P}_{\bf j} \big\}_{j \, : \, x_j \in {\bf AV}} \ \big),
\]
and then to extend the set of candidate solutions produced by the \texttt{Inspect\_Level} algorithm to a set of candidate solutions valid for {\bf PILP}. This operation is performed by setting $\bar{x}_j=x^*_j$ ($j = k,\ldots,n$) for all $\bar{\mathbf{x}} \in {\bf CS}_{z_{\bf PILP}}^{\bf Prob}$. 

To conclude, to prove that $\mathbf{x}^* \in {\bf CS}_{z_{\bf PILP}}^{\bf PILP}$, it suffices to show that $(x_1^*, \dots, x_{k-1}^*) \in {\bf CS}_{z_{\bf PILP}}^{\bf Prob}$. This result follows from lemma \ref{lemma:2} and the induction hypothesis. 

\end{enumerate}
\item $\mathbf{|Range_j^{z_{\bf PILP}}|>1 \ \forall j \in \{1,\ldots,n\}}$. Without loss of generality, let us consider that index $n$ is chosen. For each value $r_i \in Range_n^{z_{\bf PILP}}=\{r_1,\ldots,r_{|Range_n^{z_{\bf PILP}}|}\}$ a partial candidate solution is added to {\bf L} by setting $\bar{x}_n=r_i$. From lemma \ref{lemma:1}, it follows that $x^*_n \in Range_n^{z_{\bf PILP}}$, i.e., there exists $\bar{\mathbf{x}}^* \in {\bf L}$ such that $\bar{x}^*_n=x^*_n$. The algorithm then analyses all the partial candidate solutions added to {\bf L} and, therefore, after a finite number of steps it considers the candidate $\bar{\mathbf{x}}^*$, and defines ${\bf AV}=\{x_1, \dots, x_{n-1}\}$ and ${\bf Prob} = {\bf PILP}|_{\bar{\mathbf{x}}^*}$. Without loss of generality, let us suppose that $\bar{\mathbf{x}}^*$ is the only partial candidate contained in {\bf L} when it is chosen. The process is then restarted from step 1 until the algorithm terminates. The rest of the proof continues in the same manner as in case 1(b). $\square$
\end{enumerate}

\end{proof}


\section{Computational experiments}
\label{sec:5}

The performance of the PSA-{\em pilp} algorithm was compared with that of CPLEX v.12.1.0 (default) on two types of instances randomly generated of the 0-1MKP. Our algorithm was written in C, and the tests were carried out on one core of an Intel i7 3.40GHz with 16 GB of RAM.

\subsection{Data generation of test instances}
\label{subsec:5.1}

We consider the 0-1MKP, which is stated as follows:
\[
({\bf 0-1MKP}) \ \ \mathrm{max} \ \ z(\mathbf{x}) = \sum_{j = 1}^n c_jx_j \ \ \mathrm{s.t.} \ \ \sum_{j = 1}^n a_{ij}x_j \leq b_i, \ i \in \{1,2, \dots ,m\},
\]
with $\mathbf{x} = (x_1, \dots, x_n ) \in \{0,1\}^n$, and $c_j$, $a_{ij}$ and $b_i \in \Z_+$ $\forall j\in \{1,\ldots,n\}, \ i \in \{1,\ldots,m\}$. 

The test instances used in this section were randomly generated following the procedure proposed in Fr\'eville \cite{Fre}. In all of these instances the coefficients $a_{ij}$ are integer numbers uniformly generated in $U(0,1000)$; the right-hand side coefficients ($b_i$'s) are set using the formula $b_i = \alpha \sum_{j \in N} a_{ij}$, where $\alpha$ is the tightness ratio; and the objective function coefficients ($c_j$'s) are correlated to $a_{ij}$ as follows: 
\begin{itemize}
\item uncorrelated: $c_j \in U(0,1000)$
\item weakly correlated: $c_j = \frac{\sum_{i=1}^m a_{ij}}{m} + \xi$, \ with $\xi \in U(-100,100)$
\end{itemize}

The test instances were generated by varying combinations of constraints ($m = 3$ up to $5$) and variables (from $n = 200$ to $n = 10,000$). The tightness ratio, $\alpha$, was always fixed to $0.5$. For each $n - m$ combination, $5$ problems were generated.

\subsection{Implementation details of the PSA-{\em pilp} algorithm}
\label{subsec:5.2}

The implementation of the PSA-{\em pilp} algorithm that was used to carry out the computational experiments reported in this part of the paper presents the following characteristics. 

\begin{itemize}
\item The two-phase procedure described in Section \ref{subsec:3.3} was applied to determine the coefficients $P_j^{up}(0)$ and $P_j^{up}(1)$ for every instance of the 0-1MKP. In addition, the LP problems encountered during this routine were solved using the CPLEX callable library.
	
\item Due to the particular characteristics of the 0-1MKP, the lower projections of every test instance were calculated exactly by means of the following formula: $P_j^{low}(x_j) = c_j x_j + h$ for all $x_j \in [0,1]$. 

\item Every time the condition {\em $|Range_j^{z}| > 1$ for all $j$ such that $x_j \in {\bf AV}$} was reached, the active variable corresponding to the largest objective value was selected to split the partial candidate solution being scanned. 

\item The {\em last in, first out} strategy was employed to manage the list {\bf L} during the execution of the \texttt{Inspect\_Level} algorithm.
\end{itemize}

\subsection{Results and discussion}
\label{subsec:5.3}
 
Tables \ref{tab:1} and \ref{tab:2} below summarize the results obtained by both solvers on the two types of instances described previously. Columns {\bf CPLEX} and {\bf PSA-{\em pilp}} report the number of instances solved to optimality by each algorithm, followed by the average runtime (in CPU seconds) of those instances. If  the number of instances solved to optimality is less than $5$, this indicates that the algorithm failed because it ran out of memory when solving some of the instances. Column {\bf levels} shows the average number of levels scanned by the PSA-{\em pilp} algorithm until an optimal solution was reached. Column ${\bf AV}$ indicates the percentage of variables that remain {\em active} after the first iteration of the PSA-{\em pilp} algorithm at the optimal level. Column {\bf ratio} shows the average CPU time ratio between PSA-{\em pilp} and CPLEX for solving the given set of instances. Finally, column {\bf memory} indicates the average maximum virtual memory consumption (in megabytes) used by each algorithm (CPLEX/PSA-{\em pilp}) for solving the given set of instances. In all tests reported in this paper we did not limit the running time nor the memory consumption.

\begin{table}[!h]
\begin{center}
\caption{computational experiments on {\em Uncorrelated} instances}
\label{tab:1} 
\begin{tabular}{llllllll}
\hline\noalign{\smallskip}
{\bf n} & {\bf m} & {\bf CPLEX} & {\bf PSA-{\em pilp}} & {\bf levels} & {\bf AV} & {\bf ratio} & {\bf memory (CPLEX/PSA-{\em pilp})}\\ 
\noalign{\smallskip}\hline\noalign{\smallskip}
1,000 & 3 &  {\bf (5) 3 s.}  &  (5) 11 s. & 37.6 & 7.4 & 3.751 & negligible / negligible\\ 
5,000 & 3 &  {\bf (5) 109 s.}  &  (5) 180 s. & 13.8 & 2.6 & 1.648 & negligible / negligible\\
10,000 & 3 & {\bf (5) 830 s. }  &  (5) 1,992  s. & 7.2 & 1.4 & 2.39 & 953 / 283 \\
\noalign{\smallskip}\hline\noalign{\smallskip}
1,000 & 4 &  {\bf (5) 27 s.}  &  (5) 212 s. & 46.8 & 9.5 & 7.754 & negligible / negligible\\
3,000 & 4 &  {\bf (5) 1,473 s.}  &  (5) 4,959 s. & 27.4 & 5.2 & 3.36 & 1,001 / 411 \\
5,000 & 4 &  {\bf (5) 5,358 s.}  &  (5) 9,702 s. & 18.6 & 3.8 & 1.81 & 3,233 / 644\\
10,000 & 4 &  (5) 39,957 s.  & {\bf (5) 32,619 s.} & 11.2 & 2.3 & 0.81 & 22,193 / 1,765 \\
\noalign{\smallskip}\hline\noalign{\smallskip}
1,000 & 5 &  {\bf (5) 108 s.}  &  (5) 1,063 s. & 67.6 & 14.2 & 9.82 & negligible / negligible\\ 
3,000 & 5 &  {\bf (5) 10,044 s.}  &  (5) 38,706 s. & 35.2 & 7.1 & 3.85 & 4,595 / 1,826\\ 
\noalign{\smallskip}\hline
\end{tabular}
\end{center}
\end{table}

\begin{table}[!h]
\begin{center}
\caption{computational experiments on {\em Weakly Correlated} instances}
\label{tab:2}
\begin{tabular}{llllllll}
\hline\noalign{\smallskip}
{\bf n} & {\bf m} & {\bf CPLEX} & {\bf PSA-{\em pilp}} & {\bf levels} & {\bf AV} & {\bf ratio} & {\bf memory (CPLEX/PSA-{\em pilp})}\\ 
\noalign{\smallskip}\hline\noalign{\smallskip}
2,000 & 3  &  (5) 792 s.  & {\bf (5) 593 s.} &  8.6 & 8.1 & 0.749 &  620 / 213\\
3,000 & 3  &  (5) 1,311 s.  & {\bf (5) 697 s.} & 6.8 & 5.9 & 0.53 & 1,379 / 123  \\ 
5,000 & 3  &  (5) 3,704 s.  & {\bf (5) 1,413 s.} & 4 & 3.4 & 0.381 & 4,369 / 185\\ 
10,000 & 3  &  (5) 5,226 s.  & {\bf (5) 2,971 s.} & 2.8 & 2.2 & 0.568 & 6,709 / 318\\ 
\noalign{\smallskip}\hline\noalign{\smallskip}
200 & 4  &  {\bf (5) 35 s.}  &  (5) 228 s. & 56.6 & 56.8 & 6.551 & negligible / negligible\\
500 & 4  &  {\bf (5) 355 s.}  &  (5) 988 s. & 29.4 & 30 & 2.783 & negligible / negligible\\ 
1,000 & 4  &  {\bf (5) 5,567 s.}  &  (5) 8,332 s. & 20.8 & 20.4 & 1.49 & 3,953 / 905 \\  
2,000 & 4  &  (5) 32,214 s.  & {\bf (5) 19,503 s.} & 11.2 & 11.1 & 0.60 & 24,509 / 2,035 \\ 
3,000 & 4  &  (2) $>$59,931 s.  & {\bf (5) 58,529 s.} &  8.6 & 8.3 & $<$0.976 & $>$63,658 / 3,502 \\ 
\noalign{\smallskip}\hline\noalign{\smallskip}
200 & 5  &  {\bf (5) 202 s.}  &  (5) 1,821 s. & 69 & 70 & 9.021 & negligible / negligible\\ 
500 & 5  &  {\bf (5) 9,234 s.}  &  (5) 29,591 s. & 37.2 & 38 & 3.20 & 4,955 / 2,036 \\ 
1,000 & 5  &  {\bf (5) 89,542 s.}  &  (5) 93,581 s. & 25.8 & 26.2 & 1.04 & 41,244 / 4,935\\ 
\noalign{\smallskip}\hline
\end{tabular}
\end{center}
\end{table}

Based on the computational results, we conclude that the PSA-{\em pilp} algorithm is not very efficient, in terms of running time, to solve small-size instances; however, it shows a better trend than CPLEX (see ratio) when the number of variables increases, especially in the hardest type of instances. In this regard, it is worth noting that, in contrast to CPLEX (default), the implementation of the PSA-{\em pilp} algorithm does not incorporate any type of presolve, cutting plane technique, or heuristics to improve its performance. Concerning memory usage, the numbers of the PSA-{\em pilp} algorithm are considerably lower than those of CPLEX in all instances tested. The PSA-{\em pilp} algorithm consumed in average less than 10.4\% of the memory consumed by CPLEX. This can be explained by the way the algorithm conducts the search process for the optimal solution (by generating candidate solutions tailored to specific values of the objective function), and by the manner in which the {\bf L} set is managed during the execution of the \texttt{Inspect\_Level} algorithm. In fact, under these conditions it can be proven that PSA-{\em pilp}'s memory consumption is polynomial in the number of variables and the cardinality of the sets $Range_j^{z}$. Finally, it is interesting to note that the percentage of variables that are fixed to their optimal value in the first iteration of the algorithm at the optimal level, grows to more than 97\% of the total variables.


\section{Conclusions and future work} 
\label{sec:6}

This paper proposes a new exact algorithm, called PSA-{\em pilp}, for solving PILP problems using projections. The PSA-{\em pilp} algorithm differs from state-of-the-art techniques since it searches for solutions for specific values of the objective function. As a consequence of this approach, the number of variables in the original problem is systematically reduced (for each considered level) and no additional constraints are added to the initial formulation. According to our computational experiments, we believe that the proposed new algorithm paradigm has a great potential as a useful tool for solving PILP problems.

The present work leaves open a number of interesting directions for future research. First, the current version of the PSA-{\em pilp} algorithm could be greatly improved through the incorporation of advanced search strategies, preprocessing and probing techniques, cutting plane algorithms, and primal heuristics. 
Second, additional improvements can be reached via parallel computing techniques. Projection-splitting-based algorithms are natural candidates for parallelization because the subproblems associated with each level and each partial candidate solution contained in {\bf L} are completely independent. Thus, parallelism can be exploited by evaluating multiple levels and multiple partial candidate solutions simultaneously. 
Finally, it is relatively easy to see how the proposed methodology can be extended to more complex situations such as PILP problems in which the condition {\em $\mathbf{c} \in \Z^n$ and $h \in \Z$} in the objective function is relaxed, or even to MILP problems. We are going to deal with this discussion in the second part of this series.\\

\begin{acknowledgements}
This work was partially supported by grants UBACYT 20020100100666, PICT 2010-304, PICT 2011-817.
We thank Luis Mastrangelo and Santiago Feldman for their helpful suggestions and constructive criticisms.
\end{acknowledgements}



\begin{thebibliography}{}

\bibitem{Chvatal} Chv\'atal, V.: Linear Programming, W.H. Freeman (1983)

\bibitem{CPLEX} IBM ILOG CPLEX Optimizer, http://www-01.ibm.com/software/integration/optimization/cplex-optimizer/ (2014). Accessed 1 April 2014

\bibitem{D1} Dantzig, G.B.: Linear Programming and Extensions. Elsevier (2004)

\bibitem{D2} Dantzig, G.B., Thapa, M. N.: Linear Programming, 1: Introduction. Princeton University Press (1963)

\bibitem{Fre} Fr\'eville, A.: The multidimensional 0-1 knapsack problem: an overview. Elsevier (2004)

\bibitem{GaJo} Garey, M., Johnson, D.: Computers and Intractability: A Guide to the Theory of NP-Completeness. W. H. Freeman and Company, San Francisco (1979)

\bibitem {Gom} Gomory, R.E.: Outline of an algorithm for integer solutions to linear programs. Bulletin of the American Mathematical Society 64, 275-278 (1958)

\bibitem{Gurobi} Gurobi Optimizer, http://www.gurobi.com/ (2014). Accessed 1 April 2014

\bibitem{KAR} Karmarkar, N.: A new polynomial-time algorithm for linear programming. Combinatorica 4, 373-395 (1984)

\bibitem{LD} Land, A.H., Doig, A.G.: An Automatic Method for Solving Discrete Programming Problems. Econometrica 28, 497-520 (1960)

\bibitem{MT90} Martello, S., Toth, P.: Knapsack Problems: Algorithms and Computer Implementations. John Wiley and Sons (1990).

\bibitem{Nem} Nemhauser, G., Wolsey, L.: Integer Programming and Combinatorial Optimization. Wiley (1988)

\bibitem{Wo} Wolsey, L.: Integer Programming. Wiley (1998)

\end{thebibliography}
\end{document}